\newtheorem{te}{Theorem}
\newtheorem{os}{Remark}
\newtheorem{prop}{Proposition}
\newtheorem{coro}{Corollary}
\newtheorem{ass}{Assumption}
\numberwithin{equation}{section}
\author{Mirko D'Ovidio\footnote{Dipartimento di Scienze di Base e Applicate per l'Ingegneria, A. Scarpa 10 - 00161, Rome, Italy. Email: mirko.dovidio@uniroma1.it. }\\ \emph{Sapienza University of Rome}}
\begin{document}

\title{Coordinates changed random fields on the sphere}

\maketitle

\begin{abstract}
We construct time dependent random fields on the sphere through coordinates change and subordination and we study the associated angular power spectrum. Some of this random fields arise naturally as solutions of partial differential equations with random initial condition represented by a Gaussian random field.   
\end{abstract}

{\bf Keywords :} random field, power spectrum, coordinates change, time change, subordinator, fractional operator.



\section{Introduction and main results}

In this paper we construct random fields on the unit sphere with different covariance structures by means of the coordinates change of Gaussian random fields given by the  subordinate Brownian motions on the sphere. We find out random fields with spectral representations on the unit sphere which are solutions to partial differential equations, with random initial condition, involving the fractional operator
\begin{equation*}
\mathbb{D}^\alpha_M\, f(x) = \int_{\mathbf{S}^2_1}\left( f(y) - f(x) \right) J(x,y)\lambda(dy) 
\end{equation*}
where $\lambda$ is the Lebesgue measure on the unit sphere $\mathbf{S}^2_1$. Such random fields turn out to be interesting because of their covariance structure and, in particular they well explain certain forms of the angular power spectrum studied in literature. In our view, the present work directs one's efforts towards  the analysis of the Cosmic Microwave Background (CMB) radiation which is currently at the core of physical and cosmological research (\cite{Dodelson, KolbTurn, HuWhite-2004}). Indeed, our model well accords with the results in \cite{Baldi-all-2008, Dom-ann-2006, DomPec-2008, DomPec-matphys-2010} where the authors deal with high frequency (or high resolution) analysis of random fields and, in particular with \cite{DomPec-matphys-2010} where a special class of models depending on the angular power spectrum has been introduced and relationship between ergodicity and asymptotic Gaussianity has been investigated. The mathematical objects we introduce here are time dependent random fields which well depict the random movement of  particles in random environments. In particular, if $T(\vartheta, \varphi)$ is a random field on the two dimensional  sphere, then as time passes, 
\begin{equation*}
\left( \vartheta_t, \varphi_t, T(\vartheta_t, \varphi_t) \right) \in \mathbf{R}^3, \quad t>0
\end{equation*}
is a randomly varying point in a random environment given by the stochastic sphere $(\vartheta_t, \varphi_t)$, $t\geq 0$, which is the rotational Brownian motion or Brownian motion on the $2$-sphere. Since the processes we deal with are random solutions of Cauchy problems, we also know their evolution in time. The angular power spectrum of the random fields considered in this work exhibits polynomial and/or exponential behaviour in the high frequency (or resolution) analysis and therefore, we introduce a large class of models in which many aspects can be captured, such as Sachs-Wolfe effect or Silk damping effect for instance.

\subsection{Statement of results}

\subsubsection{Preliminaries}

We focus on the zero-mean, isotropic Gaussian random fields $T$ on the sphere with spectral representation
\begin{equation}
T(x) = \sum_{l=0}^{\infty} \sum_{m=-l}^{+l} a_{lm}Y_{lm}(x) = \sum_{l=0}^{\infty} T_l(x) \label{Trep}
\end{equation}
where
\begin{equation}
a_{lm}= \int_{\mathbf{S}^2} T(x) Y_{lm}^*(x) \lambda(dx) \label{alm-coeff-intro}
\end{equation}
are Fourier random coefficients. We consider the square integrable $n$-weakly isotropic (with rotationally invariant law) random field $\{ T(x);\, x \in \mathbf{S}^2_1 \}$ on the sphere $\mathbf{S}^2_1 = \{ x \in \mathbf{R}^3 \, :\, |x|=1 \}$ for which 
\begin{equation*} \mathbb{E}T(gx) = 0, \quad \textrm{and} \quad T(gx) \stackrel{law}{=} T(x), \; \textrm{ for all } g \in SO(3) 
\end{equation*}
where $\overset{law}{=}$ represents equality in law of stochastic processes and $SO(3)$ the special group of rotations in $\mathbf{R}^{3}$. The random field $T$ is said to be $n$-weakly isotropic if $\mathbb{E}|T(gx)|^n < \infty$ ($n \geq 2$) for every $x \in \mathbf{S}^2_1$ and if, for every $x_1, \ldots , x_n \in \mathbf{S}^2_1$ and every $g \in SO(3)$ we have that
\begin{equation*}
\mathbb{E}[T(x_1) \times \cdots \times T(x_n)] = \mathbb{E}[T(gx_1) \times \cdots \times T(gx_n)].
\end{equation*}
Convergence in \eqref{Trep} holds in the mean square sense, both with respect to  $L^{2}(dP\otimes \lambda (dx))$ and with respect to $L^{2}(dP)$ for fixed $x \in \mathbf{S}^2_1$, $\lambda(dx)$ is the Lebesgue measure on the unit sphere $\mathbf{S}^2_1$ and the set of spherical harmonics $\{Y_{lm}:\, l \geq 0,\; m=-l, \ldots , +l\}$ represents an orthogonal basis for the space $L^2(\mathbf{S}^2_1, \lambda(dx))$, with  $\lambda(dx) = \lambda(d\vartheta, d\varphi) = d\varphi\, d\vartheta\, \sin \vartheta$ being $x \in \mathbf{S}^2_1$ represented as
\begin{equation*}
x = (\sin \varphi \cos \vartheta, \sin \vartheta \cos \varphi, \cos \vartheta), \quad \vartheta \in  [0, \pi], \; \varphi \in [0, 2\pi].
\end{equation*}
Sometimes, we will also write $f(x)=f(\vartheta, \varphi)$. The random coefficients  \eqref{alm-coeff-intro} are zero-mean Gaussian complex random variables such that (\cite{Baldi-Marinucci-2007})
\begin{equation}
\mathbb{E}[a_{lm}a^*_{l^\prime m^\prime}] = \delta_l^{l^\prime}\delta_m^{m^\prime} C_l = \mathbb{E} | a_{lm} |^2 \label{angular-power-C}
\end{equation}
where $C_l$, $l \geq 0$ is the {\bf angular power spectrum} of the random field $T$ which fully characterizes the dependence structure of $T$ (under Gaussianity) and
\begin{equation}
\delta_{a}^{b} = \left\lbrace \begin{array}{l}
1, \quad a=b\\
0, \quad a \neq b
\end{array} \right.
\end{equation}
is the Kronecker's delta symbol. The symbol ''*'' stands for complex conjugation. The interested reader can find a deep discussion and presentation of results concerning this field in the book by Marinucci and Peccati  \cite{DomPec-book}.

For a fixed integer $l$ and $\mu_l= l(l+1)$, the spherical harmonics
\begin{equation*}
Y_{lm}(\vartheta, \varphi) = \sqrt{\frac{2l+1}{4 \pi} \frac{(l-m)!}{(l+m)!}} Q_{lm}(\cos \vartheta) e^{im\varphi}
\end{equation*}
(or linear combination of them) solve the eigenvalue problem
\begin{equation} 
\triangle_{\mathbf{S}_{1}^2} Y_{lm}= - \mu_l \, Y_{lm} \label{eigenY}, \quad l \geq 0, \; |m| \leq l
\end{equation}
where
\begin{align}
\triangle_{\mathbf{S}_{1}^2} = &  \frac{1}{\sin^2 \vartheta} \frac{\partial^2}{\partial \varphi^2} + \frac{1}{\sin \vartheta} \frac{\partial}{\partial \vartheta} \left( \sin \vartheta \frac{\partial}{\partial \vartheta} \right) , \quad \vartheta \in [0,\pi],\; \varphi \in [0, 2\pi], \label{spherical-laplace}
\end{align}
is the spherical Laplace operator. We also recall that 
\begin{equation*}
Q_{lm}(z)=(-1)^m (1-z^2)^{m/2}\frac{d^m}{d z^m}Q_{l}(z)
\end{equation*}
are the associated Legendre functions and the Rodrigues' formula
$$ Q_l(z) = \frac{1}{2^l l!}\frac{d^l}{dz^l} (z^2 - 1)^l $$
defines the Legendre polynomials.

We introduce the transition function 
\begin{equation}
\mathbb{P}_t f(x - x_0) =\mathbb{E}^x f(B^{\Psi}_t - x_0) = \sum_{l=0}^{\infty} \sum_{m=-l}^{+l} f_{lm}\, Y_{lm}(x)Y^*_{lm}(x_0) \widetilde{P_t^\Psi}(\mu_l)  \label{transition-law-B-sub}
\end{equation}
where 
\begin{equation}
\widetilde{P_t^\Psi}(\mu) = \exp\left(-t \Psi(\mu) \right) = \mathbb{E}\exp\left(- \mu D_t \right) \label{lap-sub-intro}
\end{equation}
is the Laplace transform of the law of the subordinator $D_t$, $t>0$ with Laplace exponent $\Psi$. Formula \eqref{transition-law-B-sub} with $\Psi(\mu)=\mu$ (and therefore $D_t=t$, the elementary subordinator)  is the solution of the Cauchy problem
\begin{equation}
\label{Cauchy-rotational-BM}
\begin{array}{l}
(\partial_t - \triangle_{\mathbf{S}^2_1})u =0\\
u_0=f(x-x_0)
\end{array}
\end{equation}
and, for $f=\delta$ (the Dirac delta function, this means $f_{lm} \equiv 1$ for all $l,m$) is the transition law 
\begin{equation}
u(x, x_0, t) = \mathbb{E}u_0(x + B_t - x_0) = \sum_{l=0}^\infty \frac{2l+1}{4\pi} Q_l(\langle x,x_0\rangle) e^{-t \mu_l}\label{law-standard-rot-B}
\end{equation}
of the rotational Brownian motion $B_t$, $t>0$, with values in $\mathbf{S}^2_1$ and starting point $B_0=x_0 \in \mathbf{S}^2_1$. In formula \eqref{law-standard-rot-B} we have used the addition formula \eqref{addition-formula} below where
\begin{equation*}
\langle x,y \rangle = \cos d(x,y)
\end{equation*}
is the inner product in $\mathbf{R}^3$ and $d(\cdot, \cdot)$ is the usual spherical distance. Thus, formula \eqref{transition-law-B-sub} is the spherical convolution of $f$  with the transition law \eqref{law-standard-rot-B} of the subordinate rotational Brownian motion $B^\Psi_t=B_{D_t}$, $t>0$, with $B^\Psi_0=x_0$. 

From now on, we will consider the subordinate process $B^\Psi_t - x_0$ with law \eqref{transition-law-B-sub} and $f_{lm}=1$ for all $l,m$.
 
\begin{ass}
\label{ass1}
For all $t_1, t_2 > 0$ and $x,y \in \mathbf{S}^2_1$, the subordinate rotational Brownian motions $B^{\Psi}_{t_1}-x$ and $B^{\Psi}_{t_2}-y$ are independent if and only if $x \neq y$. 
\end{ass}
\begin{os}
We notice that this assumption is rational and justified by the fact that for different starting points, the processes $B^\Psi_{t_1}$ and $B^\Psi_{t_2}$ are two independent copies of $B^\Psi_{t}$ whereas, if $x=y$, then this means that $B^\Psi_{t_1}$ and $B^\Psi_{t_2}$ are two realizations of $B^\Psi_{t}$ at different times $t_1$ and $t_2$.
\end{os}

\subsubsection{Subordinate stochastic sphere}

The main object we deal with in this work is the time-dependent random field
\begin{equation}
\mathfrak{T}_t^{\Psi}(x) =  T(B^\Psi_t-x),\quad t>0,\; x \in \mathbf{S}^2_1 \label{T-Psi-field}
\end{equation}
which is the spherical random field $T$ indexed by the subordinate rotational Brownian motion $B^\Psi_t$, $t>0$, starting form $x \in \mathbf{S}^2_1$.  We say that the set $\{  B^\Psi_t, \; t>0 \} \subset \mathbf{S}^2_1$ is a subordinate stochastic sphere and represents the new set of indices by means of which the random field $T$ is observed.  

Let $\mathfrak{F}_{B^\Psi}$ be the $\sigma$-field generated by $B^\Psi_t$. We point out that (\cite{DovNan1})
\begin{align}
\mathbb{E}[\mathfrak{T}^\Psi_t(x) ]^n = & \mathbb{E}\big[ \mathbb{E}[T(B^\Psi_t - x) \big| \mathfrak{F}_{B^\Psi} \big] = \mathbb{E} \left[ \mathbb{E}[T(gx)]^n \Big| \mathfrak{F}_{B^\Psi} \right]  = \mathbb{E}[T(gx)]^n \label{moment-T-sub}
\end{align}
for all $g \in SO(3)$ due to the fact that (\cite{DomPec-book})
\begin{equation*}
\mathbb{E}[T(gx)]^n =  \sum_{l_1\ldots l_n} \sqrt{\frac{(2l_1+1)\cdots (2l_n+1)}{(4\pi)^n}} \mathbb{E}[a_{l_10}\cdots a_{l_n 0}] < \infty, \quad g \in SO(3)
\end{equation*}
(which does not depend on $x$) where $\mathbb{E}[a_{l_10}\cdots a_{l_n 0}]$ is called the angular polyspectrum of order $n-1$ associated with the field $T$. We also recall that 
\begin{equation}
\mathbb{E}[T(x)]^2 = \sum_{l=0}^\infty \frac{2l+1}{4\pi} C_l < \infty
\end{equation}
and $T$ is square integrable.

\begin{te}
\label{thm1}
{\bf (Space-Time covariance)} For $0 \leq t_1 \leq t_2 \leq \infty$ and $x,y \in \mathbf{S}^2_1$, we have that
\begin{align}
\mathbb{E}[\mathfrak{T}^{\Psi}_{t_1} (x)\, \mathfrak{T}^{\Psi}_{t_2}(y) ] = & \sum_{l=0}^\infty \frac{2l+1}{4\pi} C_l\, Q_l(\langle x, y \rangle) \widetilde{P_{t_1+t_2}^\Psi}(\mu_l), \quad x \neq y  \label{res-thm1}
\end{align}
where $C_l$ is the  angular power spectrum of the random field $T$. 
\end{te}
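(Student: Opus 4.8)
The plan is to separate the two independent sources of randomness—the Gaussian field $T$ (encoded in its coefficients $a_{lm}$) and the subordinate rotational Brownian motion $B^\Psi$—and to reduce the whole computation to the action of the subordinate transition operator on a single spherical harmonic. First I would substitute the spectral representation \eqref{Trep} into the product, use that $T$ and $B^\Psi$ are independent (equivalently, condition on $\mathfrak{F}_{B^\Psi}$ as in \eqref{moment-T-sub}), and invoke the reality of $T$ to write $\mathfrak{T}^\Psi_{t_2}(y)=\sum_{l',m'}a^*_{l'm'}Y^*_{l'm'}(B^\Psi_{t_2}-y)$. The expectation then factors across the two kinds of randomness,
\begin{equation*}
\mathbb{E}[\mathfrak{T}^\Psi_{t_1}(x)\,\mathfrak{T}^\Psi_{t_2}(y)]=\sum_{l,m}\sum_{l',m'}\mathbb{E}[a_{lm}a^*_{l'm'}]\,\mathbb{E}\big[Y_{lm}(B^\Psi_{t_1}-x)\,Y^*_{l'm'}(B^\Psi_{t_2}-y)\big],
\end{equation*}
and the orthogonality relation \eqref{angular-power-C}, $\mathbb{E}[a_{lm}a^*_{l'm'}]=\delta_l^{l'}\delta_m^{m'}C_l$, collapses the double sum to $\sum_{l,m}C_l\,\mathbb{E}[Y_{lm}(B^\Psi_{t_1}-x)\,Y^*_{lm}(B^\Psi_{t_2}-y)]$.

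Next I would use Assumption \ref{ass1}: because $x\neq y$, the random points $B^\Psi_{t_1}-x$ and $B^\Psi_{t_2}-y$ are independent, so the surviving expectation splits as $\mathbb{E}[Y_{lm}(B^\Psi_{t_1}-x)]\,\mathbb{E}[Y^*_{lm}(B^\Psi_{t_2}-y)]$. The single-point expectation is the heart of the argument, and I would read it off from the transition law: the process $B^\Psi_t-x$ has density $p^\Psi_t(x,z)=\sum_{l'}\frac{2l'+1}{4\pi}Q_{l'}(\langle x,z\rangle)\widetilde{P^\Psi_t}(\mu_{l'})$, which is \eqref{transition-law-B-sub} with $f_{lm}=1$. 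Writing $\frac{2l'+1}{4\pi}Q_{l'}(\langle x,z\rangle)=\sum_{m'}Y_{l'm'}(x)Y^*_{l'm'}(z)$ by the addition formula and integrating $Y_{lm}(z)$ against $p^\Psi_t(x,z)$, orthonormality of $\{Y_{lm}\}$ leaves only the $(l,m)$-mode and gives
\begin{equation*}
\mathbb{E}[Y_{lm}(B^\Psi_t-x)]=Y_{lm}(x)\,\widetilde{P^\Psi_t}(\mu_l),
\end{equation*}
so the spherical harmonics are eigenfunctions of the subordinate transition operator with eigenvalue $\widetilde{P^\Psi_t}(\mu_l)$; conjugating and using that $\widetilde{P^\Psi_t}$ is real yields $\mathbb{E}[Y^*_{lm}(B^\Psi_t-y)]=Y^*_{lm}(y)\widetilde{P^\Psi_t}(\mu_l)$.

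Finally I would reassemble. Substituting the two single-point expectations produces $\sum_{l,m}C_l\,Y_{lm}(x)Y^*_{lm}(y)\,\widetilde{P^\Psi_{t_1}}(\mu_l)\widetilde{P^\Psi_{t_2}}(\mu_l)$; the multiplicative property of the Laplace exponent from \eqref{lap-sub-intro}, $\widetilde{P^\Psi_{t_1}}(\mu_l)\widetilde{P^\Psi_{t_2}}(\mu_l)=e^{-(t_1+t_2)\Psi(\mu_l)}=\widetilde{P^\Psi_{t_1+t_2}}(\mu_l)$, together with the summed addition formula $\sum_{m=-l}^{l}Y_{lm}(x)Y^*_{lm}(y)=\frac{2l+1}{4\pi}Q_l(\langle x,y\rangle)$, then reproduces exactly \eqref{res-thm1}. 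The step I expect to require the most care is the interchange of the infinite $l$-summation with the expectations; I would justify it by dominated convergence after grouping each $m$-sum into a Legendre polynomial through the addition formula, since then $\big|\sum_m Y_{lm}(U)Y^*_{lm}(V)\big|=\frac{2l+1}{4\pi}|Q_l(\langle U,V\rangle)|\le\frac{2l+1}{4\pi}$ uniformly in the (random) arguments $U,V$, while $\sum_l\frac{2l+1}{4\pi}C_l<\infty$ by square integrability of $T$, making all the rearrangements absolutely convergent. The degenerate endpoint $t_2=\infty$ then needs no separate treatment, since $\widetilde{P^\Psi_{t_1+t_2}}(\mu_l)=e^{-(t_1+t_2)\Psi(\mu_l)}\to 0$ for every $l\ge 1$ and the series reduces to its $l=0$ contribution.
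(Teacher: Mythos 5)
Your proposal is correct and follows essentially the same route as the paper: spectral expansion plus orthogonality of the $a_{lm}$, Assumption \ref{ass1} to factor the expectation for $x\neq y$, the eigenfunction identity $\mathbb{E}[Y_{lm}(B^\Psi_t-x)]=Y_{lm}(x)\widetilde{P^\Psi_t}(\mu_l)$ obtained from the transition law, the semigroup property of $e^{-t\Psi(\mu_l)}$, and the addition formula. The only cosmetic difference is that the paper records the single-point expectation as $Y^*_{lm}(x)\widetilde{P^\Psi_{t_1}}(\mu_l)$ rather than your $Y_{lm}(x)\widetilde{P^\Psi_{t_1}}(\mu_l)$ (a conjugation that washes out after summing over $m$), and your explicit dominated-convergence justification of the series interchange is a small addition the paper leaves implicit.
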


We recall that the angular power spectrum of $T$ is given by \eqref{angular-power-C} from which we recover the covariance
\begin{align}
\mathbb{E}[T(x)T(y)] = & \sum_{l=0}^\infty C_l \sum_{m=-l}^{+l} Y_{lm}(x)Y^*_{lm}(y) = \sum_{l=0}^{\infty} \frac{2l+1}{4\pi} C_l Q_l(\langle x, y \rangle). \label{cov-T}
\end{align}
In the last step we have considered once again the addition formula \eqref{addition-formula}.  Let us write the space covariance \eqref{res-thm1} as 
\begin{equation}
\gamma_t = \gamma_t(x, y) = \mathbb{E}[\mathfrak{T}^{\Psi}_{t_1} (x)\, \mathfrak{T}^{\Psi}_{t_2}(y) ], \quad \textrm{ for } \;  x \neq y, \; 0 < t = t_1+t_2 < \infty. \label{space-cov}
\end{equation} 
We can immediately verify that 
\begin{equation*}
\lim_{t \to 0^+} \gamma_t(x,y) = \gamma_0(x,y) = \mathbb{E}[T(x)T(y)]
\end{equation*}
and
\begin{equation*}
\lim_{t \to \infty} \gamma_t(x,y) = \gamma_{\infty}= \lim_{t \to \infty} \gamma_t(x,y) = \frac{C_0}{4\pi}.
\end{equation*}
Indeed, $\gamma_{\infty}$ immediately follows from \eqref{lap-sub-intro} whereas, from the representation \eqref{Trep} and formula \eqref{angular-power-C} we get \eqref{cov-T}. Therefore, the time-dependent random field \eqref{T-Psi-field} is the Gaussian random field $T$ at time zero and becomes an uniformly distributed random field as time passes and goes to infinity.

\begin{te}
\label{thm2}
{\bf (Time covariance)} For $0 \leq t_1 \leq t_2 \leq \infty$ and $x,y \in \mathbf{S}^2_1$, we have that 
\begin{align}
\mathbb{E}[\mathfrak{T}^{\Psi}_{t_1} (gx)\, \mathfrak{T}^{\Psi}_{t_2}(gx) ] = & \sum_{l=0}^{\infty} \frac{2l+1}{4\pi} C_l \, \widetilde{P_{t_2 - t_1}^\Psi}(\mu_{l}), \quad \forall\, g \in SO(3). \label{res-thm2}
\end{align}
\end{te}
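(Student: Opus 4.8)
The plan is to expand both factors in the spherical-harmonic basis and then integrate out, in turn, the Gaussian coefficients and the subordinate motion. First I would write $T(z)=\sum_{l,m}a_{lm}Y_{lm}(z)$ and, using that $T$ is real, express the second factor through complex conjugates as $\mathfrak T^{\Psi}_{t_2}(gx)=\sum_{l',m'}a^*_{l'm'}Y^*_{l'm'}(B^\Psi_{t_2})$. The essential structural point here is that both fields $\mathfrak T^{\Psi}_{t_1}(gx)=T(B^\Psi_{t_1}-gx)$ and $\mathfrak T^{\Psi}_{t_2}(gx)=T(B^\Psi_{t_2}-gx)$ are driven by the \emph{same} subordinate rotational Brownian motion issued from the common point $gx$; this is precisely the regime $x=y$ of Assumption \ref{ass1}. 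Conditioning on $\mathfrak F_{B^\Psi}$ and using that the $a_{lm}$ are independent of $B^\Psi$ with $\mathbb E[a_{lm}a^*_{l'm'}]=\delta_l^{l'}\delta_m^{m'}C_l$ by \eqref{angular-power-C}, all cross terms in $(l,l')$ and $(m,m')$ drop out and the covariance collapses to
\begin{equation*}
\mathbb E[\mathfrak T^{\Psi}_{t_1}(gx)\,\mathfrak T^{\Psi}_{t_2}(gx)]=\sum_{l=0}^\infty C_l\sum_{m=-l}^{+l}\mathbb E\big[Y_{lm}(B^\Psi_{t_1})\,Y^*_{lm}(B^\Psi_{t_2})\big].
\end{equation*}

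The decisive step is to exploit the Markov property and time-homogeneity of $B^\Psi_t=B_{D_t}$. Because $Y_{lm}$ solves the eigenvalue problem \eqref{eigenY}, the spherical heat semigroup acts by $Y_{lm}\mapsto e^{-s\mu_l}Y_{lm}$, and subordination through \eqref{lap-sub-intro} yields the key eigenrelation $\mathbb E^{z}[Y_{lm}(B^\Psi_s)]=\widetilde{P_s^\Psi}(\mu_l)\,Y_{lm}(z)$ (and likewise for $Y^*_{lm}$, the Laplacian being real). For $t_1\le t_2$ I would condition on the natural filtration $(\mathcal F_t)$ of $B^\Psi$ at time $t_1$ and apply this relation over the increment $t_2-t_1$, giving
\begin{equation*}
\mathbb E\big[Y^*_{lm}(B^\Psi_{t_2})\,\big|\,\mathcal F_{t_1}\big]=\widetilde{P_{t_2-t_1}^\Psi}(\mu_l)\,Y^*_{lm}(B^\Psi_{t_1}),
\end{equation*}
so that $\mathbb E[Y_{lm}(B^\Psi_{t_1})Y^*_{lm}(B^\Psi_{t_2})]=\widetilde{P_{t_2-t_1}^\Psi}(\mu_l)\,\mathbb E[|Y_{lm}(B^\Psi_{t_1})|^2]$. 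This is exactly where the increment $t_2-t_1$ of \eqref{res-thm2} is produced, in contrast with Theorem \ref{thm1}, where for $x\neq y$ the independence granted by Assumption \ref{ass1} factors the expectation into two separate semigroups and produces the product $\widetilde{P_{t_1}^\Psi}\widetilde{P_{t_2}^\Psi}=\widetilde{P_{t_1+t_2}^\Psi}$.

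To conclude, I would sum over $m$ using the addition formula \eqref{addition-formula} at coincident points, namely $\sum_{m=-l}^{+l}|Y_{lm}(z)|^2=\frac{2l+1}{4\pi}Q_l(1)=\frac{2l+1}{4\pi}$, which is constant in $z$. Hence $\sum_m\mathbb E[|Y_{lm}(B^\Psi_{t_1})|^2]=\frac{2l+1}{4\pi}$ independently of the starting point, and in particular independently of $g\in SO(3)$; substituting back gives $\sum_l\frac{2l+1}{4\pi}C_l\,\widetilde{P_{t_2-t_1}^\Psi}(\mu_l)$, which is \eqref{res-thm2}. It is worth noting that the vanishing of the $g$-dependence is an automatic consequence of this collapse, so no separate appeal to isotropy is needed beyond the covariance structure already used.

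The main obstacle I anticipate is the rigorous justification of exchanging the $(l,m)$-summation with the two expectations and with the conditioning. This rests on the square-integrability $\sum_l\frac{2l+1}{4\pi}C_l<\infty$, on the uniform bound $\sum_m|Y_{lm}|^2=\frac{2l+1}{4\pi}$, and on $0<\widetilde{P_s^\Psi}(\mu_l)\le 1$, which together furnish absolute convergence and legitimize the use of Fubini and dominated convergence. A secondary point needing care is the invocation of the Markov property for the subordinate process $B^\Psi$ and the validity of the eigenrelation for $B^\Psi_s$; both follow from the L\'evy (hence Markov) character of the subordinator $D_t$ together with \eqref{lap-sub-intro}.
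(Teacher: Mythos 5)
Your proposal is correct and follows essentially the same route as the paper: expand both factors in spherical harmonics, use $\mathbb{E}[a_{lm}a^*_{l'm'}]=\delta_l^{l'}\delta_m^{m'}C_l$ to reduce to $\sum_l C_l\sum_m \mathbb{E}[Y_{lm}(B^\Psi_{t_1}-gx)Y^*_{lm}(B^\Psi_{t_2}-gx)]$, exploit the Markov property of the single subordinate motion over the increment $t_2-t_1$ (the paper writes this as an explicit double integral against the transition kernel, you as a conditioning on $\mathcal F_{t_1}$ plus the eigenrelation — the same computation), and finish with the addition formula at coincident points. If anything, your handling of the $m$-sum via $\sum_m|Y_{lm}|^2=\tfrac{2l+1}{4\pi}$ is slightly cleaner on the normalization than the paper's term-by-term evaluation.
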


\begin{os}
The variance of  $\mathfrak{T}^{\Psi}_{t} (x)$ can be obtained from \eqref{res-thm2} by observing that $\widetilde{P_{0}^\Psi}(\mu_{l})=1$ for all $l \geq 0$. Also we observe that for the covariance 
\begin{equation}
\gamma_\tau = \gamma_\tau(x,x) = \mathbb{E}[\mathfrak{T}^{\Psi}_{t_1} (x)\, \mathfrak{T}^{\Psi}_{t_2}(x) ], \quad 0 < \tau = t_2-t_1 < \infty
\end{equation}
we get that $\gamma_\infty = C_0/4\pi$. 
\end{os}

\begin{os}
We notice that $0 \leq \widetilde{P_{t}^\Psi}(\mu_l) \leq 1$.
\end{os}

We say that the zero-mean process $X_t$, $t\geq 0$, exhibits long range dependence if 
\begin{equation*}
\sum_{\tau=1}^\infty \mathbb{E}[X_{t+\tau} X_t] = \infty.
\end{equation*}
Otherwise, we have short range dependence. In the following remarks we focus on the  high frequency behaviour and the dependence structure of the random field considered so far. Since 
\begin{equation*}
L^2(\mathbf{S}^2_1) = \bigoplus_{l=0}^{\infty} \mathcal{H}_l
\end{equation*} 
the random field 
\begin{equation}
\mathfrak{T}^\Psi_{l,t}(x) = \sum_{m=-l}^{+l} a_{lm} Y_{lm}(B^\Psi_t - x), \quad x \in \mathbf{S}^2_1, \; t \geq 0, \; l \geq 0 \label{high-freq-comp}
\end{equation}
represents the projection of $\mathfrak{T}^\Psi_t(x)$ on $\mathcal{H}_l$ for each $l\geq 0$ and,  the high resolution analysis (for large $l$) of $\mathfrak{T}^\Psi_t(x)$ reveals interesting properties of $\mathfrak{T}^\Psi_t(x)$. As $l$ increases we get more and more information about observations. We say that  \eqref{high-freq-comp} is the  high frequency component of $\mathfrak{T}^\Psi_t(x)$. Furthermore, (see formula \eqref{cov-fun-proof} below)
\begin{equation}
\mathbb{E}[\mathfrak{T}^\Psi_{t_1}(x) \mathfrak{T}^\Psi_{t_2}(x)] = \sum_{l,l^\prime=0}^\infty  \mathbb{E}[\mathfrak{T}^\Psi_{l,t_1}(x) \mathfrak{T}^\Psi_{l^\prime, t_2}(x)] = \sum_{l=0}^\infty  \mathbb{E}[\mathfrak{T}^\Psi_{l,t_1}(x) \mathfrak{T}^\Psi_{l, t_2}(x)]
\end{equation}
which means that \eqref{high-freq-comp} are uncorrelated over $l$.

\begin{os}
\label{rem-stable}
Let $\Psi$ be the symbol of a stable subordinator, $\Psi(\mu)=\mu^\alpha$. We have that
\begin{align*}
\sum_{\tau =1}^{\infty} \mathbb{E}[\mathfrak{T}^\Psi_{l,t+\tau}(x) \mathfrak{T}^\Psi_{l,t}(x)] = & \frac{2l+1}{4\pi} C_l \sum_{\tau =1}^{\infty} e^{-\tau \mu_l^\alpha} \\ 
= & \frac{2l+1}{4\pi} C_l \frac{1}{e^{\mu_l^\alpha} -1}\\
 \approx & \frac{2l+1}{4\pi}C_l e^{- l^{2\alpha}}
\end{align*}
for large $l$. We have the same behaviour if we consider a stable subordinator with drift, that is $\Psi(\mu)=b\mu + \mu^\alpha$ with $b>0$. 
\end{os}

\begin{os}
\label{rem-geom-stable}
Let $\Psi$ be the symbol of a geometric stable subordinator, $\Psi(\mu)=\ln(1 + \mu^\alpha)$. We have that
\begin{align*}
 \sum_{\tau =1}^{\infty} \mathbb{E}[\mathfrak{T}^\Psi_{l,t+\tau}(x) \mathfrak{T}^\Psi_{l,t}(x)]  = & \frac{2l+1}{4\pi}C_l \sum_{\tau =1}^{\infty} \frac{1}{(1+\mu_l^\alpha)^\tau}\\
 = & \frac{2l+1}{4\pi} C_l \frac{1}{\mu_l^\alpha} \\
 \approx & \frac{2l+1}{4\pi}C_l l^{-2\alpha}
\end{align*}
for large $l$. For $\alpha=1$, the geometric stable subordinator becomes the gamma subordinator.
\end{os}

\begin{os}
\label{remark-class-D}
The power spectrum is commonly assumed to be $C_l \approx l^{-\gamma}$ for $\gamma>2$ to ensure summability (in \eqref{cov-T} for instance). Our power spectrum, say $\tilde{C}_l$, takes the form $C_l e^{-l^{2\alpha}}$ or $C_l l^{-2\alpha}$ as shown in the previous remarks and therefore, introduces a class of models in terms of power spectrum of Gaussian field (a slightly modification of the class $\mathfrak{D}$ introduced in \cite{DomPec-matphys-2010}) in which, for $\theta \in \mathbf{R}$, $\nu \in [0, \infty)$,
\begin{equation}
\sum_{l = L}^\infty l^{-\theta} e^{-l^\nu} < \infty, \quad L \textrm{ large}.
\end{equation} 
We notice that $\nu$ and $\theta$ must be such that if $\nu=0$, then $\theta > 2$. In \cite{DomPec-matphys-2010}, the authors have considered the class $\mathfrak{D}$ in which they  found connections between high frequency ergodicity and high frequency Gaussianity (for large $l$). 
\end{os}

\begin{os}
Under the assumption that $C_l \approx l^{-\gamma}$, $\gamma>2$, the random fields considered in Remark \ref{rem-stable} and Remark \ref{rem-geom-stable} exhibit   short range dependence but the rate of convergence of the sum over $l$ is different. This means that the $l$th frequency component of $\mathfrak{T}^\Psi_t(x)$ has different covariance structure as $l \to \infty$ for different symbol $\Psi$.
\end{os}

\begin{os}
Let us consider the sum of an $\alpha$-stable subordinator, say $X_t$ and, a geometric $\beta$-stable subordinator, say $Y_t$. We assume that $X_t$ and $Y_t$ are independent. For $c,d\geq 0$, the subordinator $X_{ct}+Y_{dt}$, $t>0$ has the symbol 
$$\Psi(\mu) = c\,\mu^\alpha +d\, \ln (1+ \mu^\beta), \quad \alpha, \beta \in (0,1).$$
The corresponding covariance function of $\mathfrak{T}^\Psi_t(x)$ leads to
\begin{align*}
 \sum_{\tau =1}^{\infty} \mathbb{E}[\mathfrak{T}^\Psi_{l,t+\tau}(x) \mathfrak{T}^\Psi_{l,t}(x)]  = & \frac{2l+1}{4\pi}C_l \sum_{\tau =1}^{\infty} \left(\frac{e^{-c\mu_l^\alpha}}{(1+\mu_l^\beta )^d} \right)^\tau\\
 = & \frac{2l+1}{4\pi} C_l \frac{1}{e^{c\mu_l^\alpha} + \mu_l^{d \beta} e^{c \mu_l^\alpha}-1} \\
 \approx & \frac{2l+1}{4\pi}C_l l^{-2d \beta} e^{- c l^{2\alpha}}.
\end{align*}
We immediately recognize the class we dealt with in Remark \ref{remark-class-D}.
\end{os}

In \cite{DovNan1} we have considered the time-changed Brownian motion on the sphere $B_{\mathcal{L}^\nu_t}$, $t>0$, $\nu \in (0,1)$ as coordinates change for the random field $T$. The inverse process $\mathfrak{L^\nu_t}$ can be regarded as the hitting time of a stable subordinator, say $\mathfrak{H}^\nu_t$, and therefore written as (\cite{DovECP, Nane-Surv})
\begin{equation*}
\mathfrak{L}^\nu_t = \inf \{ s \geq 0 \, : \, \mathfrak{H}^\nu_s \notin (0,t) \}.
\end{equation*}
The resulting field is related to equations involving fractional derivatives in time of order $\nu$ and, it has been shown that, its covariance structure reveals long range dependence for the high frequency components. In particular, the covariance function of the $l$th frequency component is written as
\begin{equation*}
\frac{2l+1}{4\pi} C_l E_{\nu}(-\mu_l \tau^\nu) \approx \frac{2l+1}{4\pi} C_l \frac{1}{\mu_l \tau^{\nu}} 
\end{equation*}
for large $l$, where $E_\nu$ is the Mittag-Leffler function. As we can immediately see, the sum over $\tau$ of the covariance above is infinite because of $\nu \in (0,1)$.

\subsubsection{Mean subordinate stochastic sphere}
We now introduce the zero-mean Gaussian random field
\begin{equation}
\eta^\Psi_t(x) = \mathbb{E}\left[\mathfrak{T}^\Psi_t(x)\big| \mathfrak{F}_T \right] \label{eta-psi-field}
\end{equation}
where $\mathfrak{F}_T$ is the $\sigma$-field generated by the random field $T$ on $\mathbf{S}^2_1$. We say that the random field $T$ is indexed by a mean subordinate stochastic sphere meaning that \eqref{eta-psi-field} can be written as (see formula \eqref{mean-Ylm-B-psi} below)
\begin{align*}
\eta^\Psi_t(x) = & \sum_{l=0}^{\infty} \sum_{|m| \leq l} a_{lm} \mathbb{E}[Y_{lm}(B^\Psi_t - x)] \\
= & \sum_{l=0}^{\infty} \sum_{|m| \leq l} a_{lm} e^{-t \Psi(\mu_l)} Y_{lm}(x) \\
= & \sum_{l=0}^\infty  e^{-t \Psi(\mu_l)} T_l(x), \quad x \in \mathbf{S}^2_1,\; t \geq 0.
\end{align*}
Furthermore,  the covariance function is written as (see formula \eqref{cov-eta-calculation} below)
\begin{align*}
\mathbb{E}[\eta^\Psi_{t_2}(x) \eta^\Psi_{t_1}(y)] = & \sum_{l=0}^{\infty} \frac{2l+1}{4\pi} C_l Q_l(\langle x, y \rangle) \, e^{-(t_2+t_1) \Psi(\mu_l)}, \quad x,y \in \mathbf{S}^2_1, \; t_1,t_2\geq 0 
\end{align*}
which coincides with \eqref{res-thm1}. Since the covariance above is defined for all $x,y$, we can write, for $x=y$,  
\begin{equation}
\mathbb{E}[\eta^\Psi_t(gx)]^2 =  \sum_{l=0}^{\infty} \frac{2l+1}{4\pi} C_l e^{-2t \Psi(\mu_l)}, \quad \forall\, g \in SO(3) \label{var-eta-psi}
\end{equation}
which does not coincide with $\mathbb{E}[\mathfrak{T}^\Psi_t(x)]^2$ in \eqref{cov-T} as  it is in \eqref{moment-T-sub} for $\mathfrak{T}^\Psi_t(x)$. The variance \eqref{var-eta-psi} can be also obtained from the higher-order formula
\begin{equation}
\mathbb{E}[\eta_t^\Psi(gx)]^n = \sum_{l_1 \cdots l_n} e^{-t \Psi(\mu_{l_j})} \sqrt{\frac{\prod_{j=1}^n(2l_j+1)}{(4\pi)^n}} \mathbb{E}[a_{l_1 0} \cdots a_{l_n 0}]
\end{equation} 
for all $g \in SO(3)$ and $n \in \mathbf{N}$, which also differs from the higher-order moments of $T$. As we can see, in this case Assumption \ref{ass1} fails and the random field \eqref{eta-psi-field} is qualitatively different from \eqref{T-Psi-field}. Since $\Psi \geq 0$ ($\Psi$ is a Berstein function), we notice that $\mathbb{E}[\eta^\Psi_t(x)]^n \to 0$ as $t \to \infty$ for all $n$. The Fourier random coefficients are obviously time-dependent and such that
\begin{align*}
\mathbb{E}[a_{lm}(t_2) a^*_{l^\prime m^\prime}(t_1) ] = \delta_l^{l^\prime} \delta_m^{m^\prime} e^{-(t_1+t_2) \Psi(\mu_l)} \mathbb{E}|a_{lm}|^2 = \delta_l^{l^\prime} \delta_m^{m^\prime} e^{-(t_1+t_2) \Psi(\mu_l)} C_l
\end{align*}
which comes directly from \eqref{alm-coeff-intro}. We can also write
\begin{equation}
\mathbb{E}|a_{lm}(t)|^2 = e^{- 2t \Psi(\mu_l)} C_l, \quad l\geq 0, \; m=-l, \ldots , +l. \label{alm-t-coef}
\end{equation}
From the Parseval's identity we get that
\begin{equation*}
\int_{\mathbf{S}^2_1} |\eta^\Psi_t(x)|^2 \lambda(dx) = \sum_{l=0}^\infty \sum_{m=-l}^{+l} |a_{lm}|^2 e^{-2t \Psi(\mu_l)} = \sum_{l=0}^\infty\sum_{m=-l}^{+l} |a_{lm}(t)|^2 < \infty
\end{equation*}
or equivalently we say that $\eta^\Psi_t(x) \in L^2(d\lambda \otimes dP)$. 

Let $f$ be a square integrable function on the unit sphere, $f \in L^2(\mathbf{S}^2_1)$. For $\alpha \in (0,1)$, we introduce the fractional operator
\begin{equation}
\mathbb{D}_{M}^\alpha f(x) = \int_0^\infty \left( P_s\, f(x) - f(x) \right) M(ds) \label{frac-oper-sphere}
\end{equation}
where $M(\cdot)$ is the L\'{e}vy measure introduced in Section \ref{SectionAuxProof} below and $P_t = e^{t\triangle_{\mathbf{S}^2_1}}$ is the transition semigroup of the rotational Brownian motion $B_t$. As we will show below, the operator \eqref{frac-oper-sphere} can be rewritten as
\begin{equation}
\mathbb{D}^\alpha_M\, f(x) = \int_{\mathbf{S}^2_1}\left( f(y) - f(x) \right) J(x,y)\lambda(dy) 
\end{equation}
where $\lambda$ is the Lebesgue measure on $\mathbf{S}^2_1$ and 
\begin{equation*}
J(x,y) = \sum_{l=0}^\infty \frac{2l+1}{4\pi} Q_l(\langle y, x \rangle) \Psi^\prime(\mu_l)
\end{equation*}
with $\Psi^\prime(\mu) = d/d\mu \Psi(\mu)$. 

The next results are devoted to the study of the solutions of certain partial differential equations with random initial condition. Seminal works in this field are those by  Kravvaritis \cite{Rand-cond-eq-1} and Kamp\'{e} de F\'{e}riet \cite{Rand-cond-eq-2}. The literature concerning random solutions of partial differential equations is vaste and therefore we skip a further introduction and jump to the following results.  

\begin{te}
\label{thm-seq-Psi}
Let $\Psi$ be the symbol of a subordinator with no drift. For $\alpha \in (0,1)$, the solution to
\begin{equation}
\label{eq-eta-psi-field}
\left\lbrace
\begin{array}{l}
(\partial_t - \mathbb{D}_{M}^\alpha )  \eta_t^\Psi(x) =0\\
\eta^\Psi_0(x)=T(x)
\end{array} \right .
\end{equation}
is a Gaussian random field on $\mathbf{S}^2_1$ which can be written as
\begin{equation}
\eta_t^\Psi(x) = \sum_{l=0}^\infty\sum_{m=-l}^{+l} a_{lm}(t) Y_{lm}(x) = \sum_{l=0}^\infty e^{-t \Psi(\mu_l) }  T_l(x) \label{u-field-rep-Psi}
\end{equation}
where
\begin{equation}
a_{lm}(t) = a_{lm} \exp\left(-t \Psi(\mu_l) \right), \quad t>0
\end{equation}
and the series converges in the sense that for all $t \geq 0$
\begin{equation}
\lim_{L \to \infty} \mathbb{E}\left[ \int_{\mathbf{S}^2_1} \left( \eta_t^\Psi(x) -  \sum_{l=0}^L e^{-t \Psi(\mu_l) }  T_l(x) \right)^2 \lambda(dx) \right] = 0. \label{covergence-series}
\end{equation}
Furthermore, 
\begin{equation}
\eta^\Psi_t(x-x_0) = \mathbb{P}_t T(x-x_0) = \mathbb{E}[ \mathfrak{T}^\Psi_t(x-x_0) \big| \mathfrak{F}_T ] 
\end{equation}
where $\mathfrak{F}_T$ is the $\sigma$-field generated by $T$.
\end{te}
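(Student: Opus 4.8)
The plan is to diagonalize the nonlocal operator $\mathbb{D}_M^\alpha$ in the basis of spherical harmonics, obtain $\eta_t^\Psi$ as the corresponding spectral multiplier applied to $T$, and then verify the Cauchy problem and the convergence claim in the mean-square sense. The first step is to compute the action of $\mathbb{D}_M^\alpha$ on each $Y_{lm}$. By the eigenvalue relation \eqref{eigenY} the heat semigroup acts diagonally, $P_s Y_{lm} = e^{-s\mu_l} Y_{lm}$, so from the definition \eqref{frac-oper-sphere}
\begin{equation*}
\mathbb{D}_M^\alpha Y_{lm}(x) = Y_{lm}(x) \int_0^\infty \left( e^{-s\mu_l} - 1 \right) M(ds) = -\Psi(\mu_l)\, Y_{lm}(x),
\end{equation*}
where the last equality is the L\'evy--Khintchine representation $\Psi(\mu) = \int_0^\infty (1-e^{-s\mu})\, M(ds)$ of the Laplace exponent of a driftless subordinator. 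Hence each $Y_{lm}$ is an eigenfunction of $\mathbb{D}_M^\alpha$ with eigenvalue $-\Psi(\mu_l)$; since the random coefficients $a_{lm}$ are constants in the space variable, linearity gives $\mathbb{D}_M^\alpha T_l = -\Psi(\mu_l) T_l$ for every frequency projection $T_l = \sum_{|m|\le l} a_{lm} Y_{lm}$.

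With this in hand I would verify directly that $\eta_t^\Psi(x) = \sum_l e^{-t\Psi(\mu_l)} T_l(x)$ solves \eqref{eq-eta-psi-field}: term-by-term differentiation in $t$ yields $\partial_t \eta_t^\Psi = -\sum_l \Psi(\mu_l)\, e^{-t\Psi(\mu_l)} T_l$, while term-by-term application of $\mathbb{D}_M^\alpha$ yields the same expression by the eigenvalue relation, so $(\partial_t - \mathbb{D}_M^\alpha)\eta_t^\Psi = 0$; and at $t=0$ one recovers $\eta_0^\Psi = \sum_l T_l = T$ by \eqref{Trep}. The field is Gaussian as a mean-square limit of finite linear combinations of the jointly Gaussian coefficients $a_{lm}$. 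The convergence statement \eqref{covergence-series} follows from orthonormality of $\{Y_{lm}\}$ together with \eqref{angular-power-C}: the mean-square $L^2(\lambda)$ norm of the tail equals $\sum_{l>L}(2l+1)\, e^{-2t\Psi(\mu_l)} C_l \le \sum_{l>L}(2l+1) C_l$, which vanishes as $L\to\infty$ for every $t\ge 0$ because $\sum_l (2l+1) C_l < \infty$ (this is exactly the integrability $\mathbb{E}[T(x)]^2 < \infty$). The analogous bound with the extra factor $\big(\Psi(\mu_l)\, e^{-t\Psi(\mu_l)}\big)^2$, which for each fixed $t>0$ is bounded uniformly in $l$ since $s\mapsto s\,e^{-ts}$ is bounded on $[0,\infty)$, shows that the series defining $\mathbb{D}_M^\alpha \eta_t^\Psi$ and $\partial_t \eta_t^\Psi$ also converge in the same mean-square sense for $t>0$, so the term-by-term operations are legitimate.

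The main obstacle I anticipate is precisely the rigorous justification of interchanging the nonlocal operator $\mathbb{D}_M^\alpha$, which is an integral against $M(ds)$, with the infinite series, given that the random field has no pointwise spatial smoothness. I would handle this by interpreting every identity in $L^2(dP\otimes\lambda)$ and invoking the diagonal representation of the first step, so that $\mathbb{D}_M^\alpha$ acts as the multiplier $l \mapsto -\Psi(\mu_l)$ on the closed linear span of the $T_l$; dominated convergence for the $M(ds)$-integral (using $|P_s f - f|$ controlled frequency by frequency) then transfers the eigenvalue computation from each $T_l$ to the series, and the tail estimates above supply the required uniform control.

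Finally, the ``furthermore'' identity is immediate once the representation is established: the right-hand side $\mathbb{E}[\mathfrak{T}^\Psi_t(x-x_0)\,|\,\mathfrak{F}_T]$ is the defining relation \eqref{eta-psi-field}, which together with $\mathbb{E}[Y_{lm}(B^\Psi_t - x)] = e^{-t\Psi(\mu_l)} Y_{lm}(x)$ (formula \eqref{mean-Ylm-B-psi}) produces exactly $\sum_{l,m} a_{lm}\, e^{-t\Psi(\mu_l)} Y_{lm}$; this in turn coincides with the spectral form of the transition function \eqref{transition-law-B-sub} evaluated on $T$ with $f_{lm}=a_{lm}$ and $\widetilde{P_t^\Psi}(\mu_l)=e^{-t\Psi(\mu_l)}$, so that $\eta_t^\Psi(x-x_0) = \mathbb{P}_t\, T(x-x_0)$.
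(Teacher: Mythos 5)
Your proposal is correct and follows essentially the same route as the paper's proof: diagonalizing $P_s$ (hence $\mathbb{D}_M^\alpha$) on the spherical harmonics via the L\'evy--Khintchine representation $\Psi(\mu)=\int_0^\infty(1-e^{-s\mu})M(ds)$ with $b=0$, verifying the equation term by term, establishing \eqref{covergence-series} through orthogonality and the summability $\sum_l(2l+1)C_l<\infty$, and deriving the ``furthermore'' identity from \eqref{mean-Ylm-B-psi} and \eqref{semigroup-B-proof-psi}. Your tail estimates and the uniform bound on $s\mapsto se^{-ts}$ justifying the interchange of $\mathbb{D}_M^\alpha$ with the series are in fact slightly more explicit than what the paper records, but the argument is the same.
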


\begin{os}
Let $\Psi$ be the symbol of a subordinator with drift $b \geq 0$. The problem \eqref{eq-eta-psi-field} becomes
\begin{equation}
\left\lbrace
\begin{array}{l}
(\partial_t - b \triangle_{\mathbf{S}^2_1} - \mathbb{D}_{M}^\alpha )  \eta_t^\Psi(x) =0\\
\eta^\Psi_0(x)=T(x)
\end{array} \right .
\end{equation}
and the random solution is
\begin{equation}
\eta_t^\Psi(x) = \sum_{l=0}^\infty e^{-t \Psi(\mu_l) }  T_l(x) .
\end{equation}
where $\Psi$ is written as in formula \eqref{lap-exp-sub} below.
\end{os}

We now study the special case $\Psi(\mu)=\mu^\alpha$, that is to consider a subordinate rotational Brownian motion with a random time which is a stable subordinator. The corresponding random field $\mathfrak{T}^\Psi_t(x)$ solves a stochastic fractional equation involving the fractional spherical Laplacian with random initial condition given by the random field $T(x)$, $x \in \mathbf{S}^2_1$.
\begin{coro}
\label{thm-seq}
For $\alpha \in (0,1)$, the solution to
\begin{equation}
\left\lbrace
\begin{array}{l}
(\partial_t + (-\triangle_{\mathbf{S}^2_1})^\alpha )  \eta_t(x) =0\\
\eta_0(x)=T(x)
\end{array} \right .
\end{equation}
is a Gaussian random field on $\mathbf{S}^2_1$ which can be written as
\begin{equation}
\eta_t(x) = \sum_{l=0}^\infty  e^{-t \mu^\alpha_l }  \sum_{|m| \leq l} a_{lm} Y_{lm}(x) = \sum_{l=0}^\infty  e^{-t \mu^\alpha_l }  T_l(x) \label{u-field-rep}
\end{equation}
in the sense of \eqref{covergence-series}. Furthermore, 
\begin{equation}
\eta_t(x-x_0) = \mathbb{P}_t T(x-x_0) = \mathbb{E}[ \mathfrak{T}^\Psi_t(x-x_0) \big| \mathfrak{F}_T ] 
\end{equation}
where $\mathfrak{F}_T$ is the $\sigma$-field generated by $T$ and $\Psi(\mu)=\mu^\alpha$.
\end{coro}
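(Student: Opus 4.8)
The plan is to deduce the corollary from Theorem \ref{thm-seq-Psi} by specializing the Laplace exponent to $\Psi(\mu)=\mu^\alpha$, which is precisely the symbol of the $\alpha$-stable subordinator and carries no drift ($b=0$), so that the hypothesis of the theorem is met. The only genuine point to verify is that, under this choice, the fractional operator $\mathbb{D}^\alpha_M$ coincides with $-(-\triangle_{\mathbf{S}^2_1})^\alpha$, so that the Cauchy problem $(\partial_t - \mathbb{D}^\alpha_M)\eta_t=0$ of the theorem becomes exactly $(\partial_t + (-\triangle_{\mathbf{S}^2_1})^\alpha)\eta_t=0$ of the corollary.

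First I would compute the action of $\mathbb{D}^\alpha_M$ on the spherical harmonics. Since $P_s=e^{s\triangle_{\mathbf{S}^2_1}}$ and $\triangle_{\mathbf{S}^2_1}Y_{lm}=-\mu_l Y_{lm}$ by \eqref{eigenY}, we have $P_s Y_{lm}=e^{-s\mu_l}Y_{lm}$, whence from \eqref{frac-oper-sphere}
\[
\mathbb{D}^\alpha_M Y_{lm} = \left(\int_0^\infty \left(e^{-s\mu_l}-1\right) M(ds)\right) Y_{lm} = -\Psi(\mu_l)\,Y_{lm},
\]
the last equality being the L\'evy--Khintchine representation $\Psi(\mu)=\int_0^\infty(1-e^{-s\mu})M(ds)$ of the Laplace exponent of a driftless subordinator. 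For $\Psi(\mu)=\mu^\alpha$ this reads $\mathbb{D}^\alpha_M Y_{lm}=-\mu_l^\alpha Y_{lm}$. On the other hand, the spectral definition of the fractional power gives $(-\triangle_{\mathbf{S}^2_1})^\alpha Y_{lm}=\mu_l^\alpha Y_{lm}$, so the two operators agree on every $Y_{lm}$ and hence on the dense span of the harmonics, which is the identification claimed above.

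With this in hand, the representation \eqref{u-field-rep} is exactly \eqref{u-field-rep-Psi} with $\Psi(\mu_l)$ replaced by $\mu_l^\alpha$, and the convergence in the sense of \eqref{covergence-series} together with the conditional-expectation identity $\eta_t(x-x_0)=\mathbb{P}_t T(x-x_0)=\mathbb{E}[\mathfrak{T}^\Psi_t(x-x_0)\,|\,\mathfrak{F}_T]$ are inherited from Theorem \ref{thm-seq-Psi}. For completeness one may verify the statement directly: inserting the ansatz $\eta_t(x)=\sum_l e^{-t\mu_l^\alpha}T_l(x)$, term-by-term differentiation in $t$ produces $-\mu_l^\alpha e^{-t\mu_l^\alpha}T_l$, while $(-\triangle_{\mathbf{S}^2_1})^\alpha$ acting on $T_l\in\mathcal{H}_l$ produces $+\mu_l^\alpha e^{-t\mu_l^\alpha}T_l$, so the two cancel, and the initial condition $\eta_0=\sum_l T_l=T$ holds by \eqref{Trep}.

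The main obstacle is justifying the term-by-term application of the operators to the random series and the exchange of the operators with the infinite sum. I would control this using the orthogonality of the $T_l$ over $l$ and the bound $e^{-2t\mu_l^\alpha}\le 1$: since $\sum_l \frac{2l+1}{4\pi}C_l<\infty$ by square-integrability of $T$, the partial sums form a Cauchy sequence in $L^2(dP\otimes\lambda)$ uniformly in $t\ge 0$, which legitimizes both the convergence \eqref{covergence-series} and the formal manipulations above.
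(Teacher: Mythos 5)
Your proposal is correct and follows essentially the same route as the paper: the paper's own proof applies the Bochner formula \eqref{beautiful} (which is exactly $\mathbb{D}^\alpha_M$ with the stable L\'evy measure $M(ds)=\alpha s^{-\alpha-1}ds/\Gamma(1-\alpha)$) to the ansatz \eqref{u-field-rep}, computes $\int_0^\infty(e^{-s\mu_l}-1)M(ds)=-\mu_l^\alpha$ on each $Y_{lm}$, and matches this with $\partial_t e^{-t\mu_l^\alpha}$, which is precisely your specialization of Theorem \ref{thm-seq-Psi} to $\Psi(\mu)=\mu^\alpha$. Your explicit identification of $\mathbb{D}^\alpha_M$ with $-(-\triangle_{\mathbf{S}^2_1})^\alpha$ and your remark on the $L^2$ justification of the term-by-term manipulations make explicit what the paper leaves implicit, but they do not constitute a different argument.
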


The definition of fractional powers of operators is given below and intended in the sense of Bochner's subordination rule (see for example \cite{Komatzu}). This is an interesting topic with a long history, we refer to \cite{DovSPA} and especially to the references therein for alternative definitions.

\begin{os}
Let the symbol $\Psi$ be as in Remark \ref{remark-class-D}. For a fixed time $t\geq 0$, say $t=1$ for the sake of simplicity, the coefficients \eqref{alm-t-coef} define an angular power spectrum, say $\tilde{C}_l$, which takes the form
\begin{equation}
\tilde{C}_l = C_l \, e^{- (c\mu^\alpha + d\ln (1+\mu^\beta))} \approx C_l \, l^{-2 d \beta } e^{-c l^{2\alpha}}\label{pow-spec-doppio-sub} 
\end{equation}
for large $l$. In particular, we obtain a large class of random fields with angular power spectrum  \eqref{pow-spec-doppio-sub} and parameters $\beta, \alpha \in [0,1]$ such that
\begin{equation*}
\sum_{l=L}^\infty \tilde{C}_t < \infty, \quad L \, \textrm{ large}.
\end{equation*}
In the analysis of the CMB radiation for instance, in modelling the Sachs-Wolfe effect we have a polynomial decay of the angular power spectrum whereas, the Silk damping effect entails an exponential decay as $l$ increases.
\end{os}

\begin{coro}
\label{coro-ultimo}
Let us consider $\Psi(\mu)=\mu^\alpha$, $\alpha \in (0,1)$ in \eqref{space-cov}. We have that
\begin{equation}
(\partial_t + (-\triangle_{\mathbf{S}^2_1})^\alpha)\gamma_t(x, \cdot) = 0 = (\partial_t + (-\triangle_{\mathbf{S}^2_1})^\alpha)\gamma_t(\cdot, y) 
\end{equation}
in both space variables $x$ and $y$,  subject to the initial condition 
$$\gamma_0(x, y) = \mathbb{E}[T(x)T(y)]$$
where $x,y \in \mathbf{S}^2_1$ with no restriction. 
\end{coro}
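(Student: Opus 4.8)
The plan is to work entirely from the explicit spectral representation of the space covariance. Setting $\Psi(\mu)=\mu^\alpha$ in Theorem \ref{thm1} and recalling \eqref{lap-sub-intro}, the function $\gamma_t$ of \eqref{space-cov} reads
\begin{equation*}
\gamma_t(x,y) = \sum_{l=0}^\infty \frac{2l+1}{4\pi}\, C_l\, Q_l(\langle x,y\rangle)\, e^{-t\mu_l^\alpha}, \qquad t=t_1+t_2.
\end{equation*}
Although \eqref{res-thm1} was obtained under $x\neq y$, the right-hand side is a well-defined function of $(t,x,y)$ for every $x,y\in\mathbf{S}^2_1$ (at $x=y$ one has $Q_l(1)=1$), and it is this analytic object that I would show solves the stated Cauchy problem; this is the meaning of the phrase ''with no restriction''.

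Next I would record how the fractional operator acts on each summand. By \eqref{eigenY} the spherical harmonics are eigenfunctions of $\triangle_{\mathbf{S}^2_1}$ with eigenvalue $-\mu_l$, so by the Bochner subordination definition of fractional powers one has $(-\triangle_{\mathbf{S}^2_1})^\alpha Y_{lm}=\mu_l^\alpha Y_{lm}$. The addition formula \eqref{addition-formula} writes $Q_l(\langle\cdot,y\rangle)$ as a linear combination of the $Y_{lm}$, hence $Q_l(\langle\cdot,y\rangle)\in\mathcal{H}_l$ and
\begin{equation*}
(-\triangle_{\mathbf{S}^2_1})^\alpha_x\, Q_l(\langle x,y\rangle) = \mu_l^\alpha\, Q_l(\langle x,y\rangle).
\end{equation*}
Since $\langle x,y\rangle=\langle y,x\rangle$, the identical relation holds with the operator acting in the variable $y$. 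With these two ingredients the verification is a term-by-term cancellation: differentiating the series in $t$ produces the factor $-\mu_l^\alpha$, applying $(-\triangle_{\mathbf{S}^2_1})^\alpha_x$ produces the factor $+\mu_l^\alpha$, and each summand of $(\partial_t+(-\triangle_{\mathbf{S}^2_1})^\alpha_x)\gamma_t$ vanishes; the same computation in the $y$ variable gives the second equality. The initial condition is immediate, for at $t=0$ the exponentials equal $1$ and the series reduces to \eqref{cov-T}, that is $\gamma_0(x,y)=\mathbb{E}[T(x)T(y)]$.

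The one point requiring care — and the main obstacle — is the justification for interchanging the infinite sum with both $\partial_t$ and $(-\triangle_{\mathbf{S}^2_1})^\alpha$. For $t>0$ this follows from the exponential damping: since $\mu_l=l(l+1)$, the factors $\mu_l^\alpha e^{-t\mu_l^\alpha}$ decay superpolynomially in $l$, and together with the standing summability of $\{C_l\}$ and the bound $|Q_l(\langle x,y\rangle)|\le Q_l(1)=1$ this makes the differentiated series and its fractional-Laplacian counterpart converge absolutely and uniformly, legitimating the term-by-term operations. The same bound yields continuity of the series down to $t=0$, securing the initial condition in the pointwise (and $L^2(\lambda)$) sense.
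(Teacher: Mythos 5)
Your proposal is correct and follows essentially the same route as the paper: both expand $\gamma_t$ in the spectral series (the paper writes it as $\sum_l e^{-t\mu_l^\alpha}\mathbb{E}[T_l(x)T_l(y)]$, which by the addition formula \eqref{addition-formula} is exactly your $\sum_l \tfrac{2l+1}{4\pi}C_l Q_l(\langle x,y\rangle)e^{-t\mu_l^\alpha}$) and then cancel term by term using $(-\triangle_{\mathbf{S}^2_1})^\alpha$ acting as multiplication by $\mu_l^\alpha$ on $\mathcal{H}_l$, as established in the proof of Corollary \ref{thm-seq}. Your added justification of the term-by-term differentiation via the exponential damping, and your explicit remark on why the series makes sense for $x=y$, are points the paper passes over silently but do not change the argument.
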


\section{Auxiliary results and proofs}
\label{SectionAuxProof}
Let us consider the L\'{e}vy process $\mathbf{F}_t$, $t>0$, with associated Feller semigroup $P_t \, f(\mathbf{x}) = \mathbb{E}f(\mathbf{x} + \mathbf{F}_t)$ solving $\partial_t u = \mathcal{A}u$ with initial datum $u_0=f$. For the infinitesimal generator $\mathcal{A}$ of $\mathbf{F}_t$, $t>0$ the following representation holds
\begin{equation}
(\mathcal{A}f)(\mathbf{x}) = -\frac{1}{(2\pi)^d} \int_{\mathbf{R}^d} e^{-i \boldsymbol{\xi}\cdot \mathbf{x}} \Phi(\boldsymbol{\xi}) \widehat{f}(\boldsymbol{\xi})\boldsymbol{d \xi} \label{inv-fourier-symbol}
\end{equation}
for all functions in the domain 
\begin{equation}
D(\mathcal{A}) = \left\lbrace f \in L^2(\mathbf{R}^d,\mathbf{dx})\,:\, \int_{\mathbf{R}^d} \Phi(\boldsymbol{\xi}) |\widehat{f}(\boldsymbol{\xi})|^2 \boldsymbol{d \xi}< \infty \right\rbrace
\end{equation}
where $\widehat{f}(\boldsymbol{\xi}) = \int_{\mathbf{R}^d} e^{i \boldsymbol{\xi}\cdot \mathbf{x}} f(\mathbf{x}) \mathbf{dx}$ is the Fourier transform of $f$, $\Phi(\cdot)$ is continuous and negative definite. We say that $P_t$ is a pseudo-differential operator with Fourier symbol $\widehat{P_t} = \exp(-t\Phi)$ and, $\Phi$ is the Fourier symbol of $\mathcal{A}$, that is $\widehat{(\mathcal{A}f)}(\boldsymbol{\xi}) = -\Phi(\boldsymbol{\xi}) \widehat{f}(\boldsymbol{\xi})$. L\'{e}vy processes with
\begin{equation}
\Phi(\xi) = ib\xi + \int_0^\infty \left( e^{i\xi y } - 1  \right) M(dy)
\end{equation} 
where $b\geq 0$ and the L\'{e}vy measure $M(\cdot)$ satisfies $\int_0^\infty (y \wedge 1)M(dy) < \infty$ and $M(-\infty, 0)=0$, are the so-called subordinators and possesses non-negative increments, that is non-decreasing paths. 
The Laplace transform of the law of  a subordinator leads to the Laplace exponent
\begin{equation}
\Psi(\mu) = - \Phi(i\mu) = b\mu + \int_0^\infty \left( 1 - e^{-\mu y} \right) M(dy) \label{lap-exp-sub}
\end{equation}
for each $\mu >0$ and therefore, we can write $\widetilde{P^\Psi_t}(\mu) = \exp\left( -t \Psi(\mu) \right)$. We write below some explicit forms of the Laplace exponent $\Psi$ and the measure $M$ of the corresponding subordinator ($\alpha \in (0,1)$):
\begin{itemize}
\item $\Psi(\mu) = \mu^\alpha$: stable, $M(y) = \alpha y^{-\alpha -1}/ \Gamma(1-\alpha)$;
\item $\Psi(\mu) = b\mu + \mu^\alpha$: stable with drift, $M(\cdot)$ as above and $b>0$ in \eqref{lap-exp-sub};
\item $\Psi(\mu) = \ln (1 + \mu )$: gamma, $M(y) = y^{-1}e^{-y}$;
\item $\Psi(\mu) = \ln (1+ \mu^\alpha)$: geometric stable, $M(y)=\alpha y^{-1}E_\alpha(-y)$ where 
\begin{equation*}
E_\alpha(z) = \sum_{j=0}^\infty \frac{z^j}{\Gamma(\alpha j + 1)}
\end{equation*}
is the Mittag-Leffler function.
\end{itemize}

Let $\mathfrak{H}^\alpha_t$, $t>0$ be a stable subordinator with $\alpha \in (0,1)$. The generator of $\mathbf{F}_{\mathfrak{H}^\alpha_t}$, $t>0$ is given by the beautiful formula
\begin{equation}
-(-\mathcal{A})^\alpha f(x) = \frac{\alpha}{\Gamma(1-\alpha)} \int_0^\infty \Big( P_s \, f(x) - f(x) \Big) \frac{ds}{s^{\alpha +1}} \label{beautiful}
\end{equation}
for all $f \in \mathscr{S}$ (the space of rapidly decaying $C^\infty$ functions) and we formally write the Feller semigroup of $\mathbf{F}_t$, $t>0$ as $P_t=e^{t\mathcal{A}}$ which must be understood in the sense of functional calculus. For $\mathcal{A}=-\frac{\partial}{\partial x}$, $P_t$ is the translation semigroup (or shift operator in the context of operational calculus) and formula \eqref{beautiful} becomes the Riemann-Liouville fractional derivative governing $\alpha$-stable subordinators. It is well known that, for $\Phi(\boldsymbol{\xi})=|\boldsymbol{\xi} |^\alpha$, formula \eqref{inv-fourier-symbol} gives us the fractional power of the Laplace operator which can be also expressed as 
\begin{align}
-(-\triangle_{\mathbf{R}^d})^{\alpha} f(\mathbf{x}) =  & C_d(\alpha) \, \textrm{p.v.} \int_{\mathbf{R}} \frac{f(\mathbf{x}+\mathbf{y}) -f(\mathbf{x})}{|\mathbf{y}|^{2\alpha +d}}\mathbf{dy}, \quad \mathbf{x} \in \mathbf{R}^d \label{lap-princ-val}
\end{align}
where ''p.v.'' stands for the ''principal value'' of the singular integrals above near the origin and $C_d(\alpha)$ is a normalizing constant depending on $d$ and $\alpha$. The subordinate Brownian motion on the sphere is obtained by considering the rotational Brownian motion on $\mathbf{S}^2_1$ with a randomly varying time given by $\mathfrak{H}^\alpha_t$. The governing operator is the fractional power of the spherical Laplace operator \eqref{spherical-laplace}, that is $-(-\triangle_{\mathbf{S}^2_1} )^{\alpha}$, $\alpha \in (0,1)$ or $P_s =\exp( s \triangle_{\mathbf{S}^2_1})$ in formula \eqref{beautiful}. 

Hereafter, we use also the symbols
\begin{equation*}
\sum_{lm} = \sum_l \sum_m = \sum_l \sum_{|m| \leq l} = \sum_{l=0}^\infty \sum_{m=-l}^{+l}
\end{equation*}
in order to streamline the notation as much as possible.

First we discuss on the solution to \eqref{Cauchy-rotational-BM}. Let  $f \in L^2(\mathbf{S}^2_1)$.  Since $\{Y_{lm}:l\in \mathbb{N},  m\in \mathbb{Z} \ \mathrm{and}\ |m|\leq l\}$ is dense in $L^2(\mathbf{S}^2_1)$, the function $f$ (see for example the Peter-Weyl representation theorem on the sphere in \cite{DomPec-book} or,  the references therein) can be written as
\begin{equation*}
f(x)=f(\vartheta, \varphi) = \sum_{lm} f_{lm} Y_{lm}(\vartheta, \varphi)
\end{equation*}
which holds in the $L^2$ sense, where
\begin{equation}
f_{lm}=\int_{\mathbf{S}^2_1} f(\vartheta, \varphi) Y^*_{lm}(\vartheta, \varphi)\, \sin \vartheta\, d\vartheta\, d\varphi . \label{f-coeff-exp}
\end{equation}
The {\bf angular power spectrum} of $f$ is defined as
\begin{equation}\label{def-angular-spectrum}
A_l = A_l (f) = \sum_{|m|\leq l } | f_{lm}|^2.
\end{equation}
By following the same arguments we get
\begin{equation}
f(x-x_0) = \sum_{lm} f_{lm}Y_{lm}(x) Y^*_{lm}(x_0). \label{f-x-x0}
\end{equation}
The strongly continuous semigroup $P_t$ associated to $\triangle_{\mathbf{S}^2_1}$ is written as
\begin{align*}
P_t\, f(x - x_0) = & \mathbb{E}^x f(B_t - x_0) = \sum_{lm}  f_{lm}Y_{lm}(x)Y^*_{lm}(x_0) \exp\left( -t \mu_l \right)
\end{align*}
where $x_0 \in \mathbf{S}^2_1$ is the starting point at $t_0=0$ of a rotational Brownian motion $B_t$, $t>0$ (or Brownian motion on the unit sphere $\mathbf{S}^2_1$) and comes out from 
\begin{align*}
\mathbb{E}^x f(B_t - x_0) = & \sum_{l^\prime m^\prime} \sum_{lm} f_{lm} Y_{l^\prime m^\prime}(x) Y^*_{lm}(x_0) \int_{\mathbf{S}^2_1} Y^*_{l^\prime m^\prime}(y) Y_{lm}(y) \lambda(dy)\, \exp\left( -t \mu_l \right)
\end{align*}
by considering \eqref{f-x-x0} and the orthogonality property of spherical harmonics
\begin{equation}
\int_{\mathbf{S}^2_1} Y_{lm}(x)\, Y_{l^\prime m^\prime}^{*}(x)\, \lambda(dx)  = \delta_{l}^{l^\prime}\, \delta_{m}^{m^\prime} \label{orthoY}
\end{equation}
where $Y^*_{lm}= (-1)^mY_{l-m}$ is the complex conjugation of $Y_{lm}$. For the sake of clarity we notice that, according to \eqref{transition-law-B-sub}, $ P_t = \mathbb{P}_t$ for $\Psi(\mu)=\mu$.

\begin{os}
We observe that:
\begin{enumerate}
\item $\mathbb{P}_0=Id$,
\item $\mathbb{P}_t 1= \mathbb{P}_t 1_{\mathbf{S}^2_1} = 1$,
\item $\mathbb{P}_t\, \mathbb{P}_s = \mathbb{P}_{t+s}$.
\end{enumerate}
Indeed, $(1)$ is easy to check, see \eqref{f-x-x0}; $(2)$ follows from \eqref{orthoY}; $(3)$ is a consequence of the Bochner subordination rule and the fact that the law of $B_t$ satisfies the Chapman-Kolmogorov equation. Set $f_{lm}=1$ for all $l,m$, that is $f=\delta$ and $\mathbb{P}_t \delta$ is the transition function of $B^\Psi_t$, $t>0$. For all $x, y \in \mathbf{S}^2_1$ (reproducing kernel) 
\begin{equation}
 \int_{\mathbf{S}^2} Q_{l}(\langle x,z \rangle)\, Q_{l^\prime}(\langle z, y \rangle)\, \lambda(dz) = \delta_{l}^{l^\prime} \frac{4\pi}{2l+1} Q_{l}(\langle x, y \rangle) \label{reproducK}
\end{equation}
and $\mathbb{P}_t \delta(x-x_0)$ satisfies the Chapman-Kolmogorov equation.
\end{os}

The convergence of the series expansion \eqref{Trep} is understood in the following sense
\begin{equation*}
\lim_{L\rightarrow \infty }\mathbb{E}\left[ \int_{\mathbf{S}^{2}}\left( T(x) - \sum_{l=0}^{L} \sum_{m=-l}^{+l} a_{lm} Y_{lm}(x) \right)^{2} \lambda (dx) \right] =0
\end{equation*}
and also
\begin{equation*}
\lim_{L\rightarrow \infty }\mathbb{E}\left(T(x)-\sum_{l=0}^{L}\sum_{m=-l}^{+l}a_{lm}Y_{lm}(x)\right)^{2}= 0
\end{equation*}
as pointed out in the introduction. The coordinates change random field is defined as follows
\begin{equation}
\mathfrak{T}_t^{\Psi}(x) = \sum_{lm} a_{lm}Y_{lm}(B^\Psi_t - x), \quad x \in \mathbf{S}^2_1, \; t >0 \label{T-Psi-rep}
\end{equation}
where the subordinate rotational Brownian motion is a measurable map from $(\Omega, \mathfrak{F}_{B^\Psi}, P)$ to $(\mathbf{S}^2_1, \mathscr{B}(\mathbf{S}^2_1), \lambda)$. Therefore, the convergence of \eqref{T-Psi-rep} must be understood in the sense that for all $t\geq 0$
\begin{equation*}
\lim_{L\rightarrow \infty } \mathbb{E}\left[ \mathbb{E} \Bigg[ \int_{\mathbf{S}^{2}}\left( T(B^\Psi_t - x) - \sum_{l=0}^{L} \sum_{m=-l}^{+l} a_{lm} Y_{lm}(B^\Psi_t - x) \right)^{2} \lambda (dx)  \Bigg| \mathfrak{F}_{B^\Psi} \Bigg] \right] =0
\end{equation*}
and also, $\forall\, t$
\begin{equation*}
\lim_{L\rightarrow \infty } \mathbb{E}\left[ \mathbb{E}\left(T(B^\Psi_t - x)-\sum_{l=0}^{L}\sum_{m=-l}^{+l}a_{lm}Y_{lm}(B^\Psi_t - x)\right)^{2} \Bigg| \mathfrak{F}_{B^\Psi} \right]= 0.
\end{equation*}
From  \eqref{Trep} we write
$$\mathfrak{T}^{\Psi}_t(x) = \sum_{l \geq 0} \mathfrak{T}^{\Psi}_{l,t}(x)$$
where
$$\mathfrak{T}^{\Psi}_{l,t}(x) = \sum_{|m| \leq l} a_{lm}Y_{lm}(B^{\Psi}_t -x), \quad t>0,\; l \geq 0$$
is a random field representing the $l$th frequency component of $\mathfrak{T}^{\Psi}_t$ or the projection of $\mathfrak{T}^{\Psi}_t$ on $\mathcal{H}_l$ which is the space generated by $\{Y_{lm}: |m| \leq l \}$. We have that
\begin{align*}
\mathbb{E}[\mathfrak{T}^{\Psi}_{l_1,t_1}(x) \mathfrak{T}^{\Psi}_{l_2,t_2}(y)] = &  \sum_{|m_1|\leq l_1} \sum_{|m_2| \leq l_2} \mathbb{E}[a_{l_1m_1}a^*_{l_2m_2}] \, \mathbb{E}[Y_{l_1m_1}(B^{\Psi}_{t_1}-x)\, Y^*_{l_2m_2}(B^\Psi_{t_2}-y)]. 
\end{align*}
and the Fourier random coefficients $a_{lm}$ are independent Gaussian complex random variables for which \eqref{angular-power-C} holds. Thus,  we get that
\begin{align*}
\mathbb{E}[\mathfrak{T}^{\Psi}_{l_1,t_1}(x) \mathfrak{T}^{\Psi}_{l_2,t_2}(y)] = & \delta_{l_1}^l\delta_{l_2}^l \sum_{|m|\leq l}  C_{l} \, \mathbb{E}[Y_{lm}(B^{\Psi}_{t_1}-x)\, Y^*_{lm}(B^{\Psi}_{t_2}-y)]
\end{align*}
and therefore, the covariance function of $\mathfrak{T}^{\Psi}_{t} (x)$ can be written as follows
\begin{align}
\mathbb{E}[\mathfrak{T}^{\Psi}_{t_1} (x)\, \mathfrak{T}^{\Psi}_{t_2}(y) ] = & \sum_{l \geq 0} \mathbb{E}[\mathfrak{T}^{\Psi}_{l,t_1}(x) \mathfrak{T}^{\Psi}_{l,t_2}(y)], \quad x \neq y  \label{cov-fun-proof}
\end{align}
and we recall that $C_l$, $l \geq 0$ is the angular power spectrum of the random field $T$.

\begin{proof}[Proof of Theorem \ref{thm1}]
We have that $x \neq y$, then for the Assumption \ref{ass1}, we write \eqref{cov-fun-proof} as follows 
\begin{align*}
\mathbb{E}[\mathfrak{T}^{\Psi}_{l_1,t_1}(x) \mathfrak{T}^{\Psi}_{l_2,t_2}(y)] = &  \delta_{l_1}^l\delta_{l_2}^l \sum_{|m|\leq l}  C_l \, \mathbb{E}[Y_{lm}(B^{\Psi}_{t_1}- x)]\, \mathbb{E}[Y^*_{lm}(B^{\Psi}_{t_2}-y)]
\end{align*}
where, from the fact that
\begin{equation}
\frac{2l^\prime +1}{4\pi} \int_{\mathbf{S}^2_1} Q_{l^\prime}(\langle x,y \rangle) Y_{lm}(y) \lambda(dy) = \delta_l^{l^\prime} Y^*_{lm}(x),
\end{equation} 
we obtain that
\begin{align}
\mathbb{E}[Y_{lm}(B^{\Psi}_{t_1}- x)] = & \sum_{l^\prime} \frac{2l^\prime +1}{4\pi} \int_{\mathbf{S}^2_1} Y_{lm}(y)  Q_{l^\prime}(\langle y, x \rangle) \lambda(dy) \widetilde{P_{t_1}^\Psi}(\mu_{l^\prime}) = Y^*_{lm}(x) \widetilde{P_{t_1}^\Psi}(\mu_{l}) \label{mean-Ylm-B-psi}
\end{align}
which is as to write $f_{l m} = \delta_l^{l^\prime} \delta_m^{m^\prime}$ in \eqref{f-coeff-exp} with $f$ as in formula \eqref{transition-law-B-sub}. From the fact that $Y^*_{lm} = (-1)^m Y_{l-m}$, we also obtain 
\begin{align*}
\mathbb{E}[Y^*_{lm}(B^{\Psi}_{t_2} - y)] = & (-1)^m \mathbb{E}[Y_{l -m}(B^{\Psi}_{t_2} - y)]\\
= & (-1)^m Y^*_{l-m}(x) \widetilde{P_{t_2}^\Psi}(\mu_{l})\\
= & Y_{lm}(y) \widetilde{P_{t_2}^\Psi}(\mu_{l}).
\end{align*}
Thus, we get that
\begin{align*}
\mathbb{E}[\mathfrak{T}^{\Psi}_{l_1,t_1}(x) \mathfrak{T}^{\Psi}_{l_2,t_2}(y)] = &  \delta_{l_1}^l\delta_{l_2}^l \, C_l \, \widetilde{P_{t_1+t_2}^\Psi}(\mu_l) \sum_{|m|\leq l}   Y_{lm}(x)Y^*_{lm}(y)
\end{align*}
and, from the addition formula
\begin{equation}
\sum_{|m| \leq l} Y_{lm}(x)Y^*_{lm}(y) =  \frac{2l+1}{4\pi} Q_l(\langle x, y \rangle) \label{addition-formula}
\end{equation}
we arrive at
\begin{align}
\mathbb{E}[\mathfrak{T}^{\Psi}_{l_1,t_1}(x) \mathfrak{T}^{\Psi}_{l_2,t_2}(y)] = & \delta_{l_1}^l\delta_{l_2}^l \frac{2l+1}{4\pi} C_l \,\widetilde{P_{t_1+t_2}^\Psi}(\mu_l)\, Q_l(\langle x, y \rangle) \label{cov-proof1}
\end{align}
which is the (space-time) covariance of the $l$th frequency component of $\mathfrak{T}^{\Psi}_t$. By combining \eqref{cov-proof1} with \eqref{cov-fun-proof} we obtain the claim.
\end{proof}

\begin{proof}[Proof of Theorem \ref{thm2}]
We start once again from the fact that
\begin{align*}
\mathbb{E}[\mathfrak{T}^{\Psi}_{l_1,t_1}(x) \mathfrak{T}^{\Psi}_{l_2,t_2}(y)] = & \delta_{l_1}^l\delta_{l_2}^l \sum_{|m|\leq l}  C_{l} \, \mathbb{E}[Y_{lm}(B^{\Psi}_{t_1}-x)\, Y^*_{lm}(B^{\Psi}_{t_2}-y)]
\end{align*}
as we have shown in the previous proof. Here we have that $x=y$ and therefore, from Assumption \ref{ass1}, we get that
\begin{align*}
& \mathbb{E}[Y_{lm}(B^{\Psi}_{t_1}- x)\,Y^*_{lm}(B^{\Psi}_{t_2}-x)] \\
= & \sum_{l^\prime} \frac{2l^\prime +1}{4\pi} \int_{\mathbf{S}^2_1} \int_{\mathbf{S}^2_1}  Q_{l^\prime}(\langle y_1, y_2 \rangle ) Y_{lm}(y_1) Y_{lm}(y_2) \lambda(dy_1)\lambda(dy_2) \widetilde{P_{t_2 - t_1}^\Psi}(\mu_{l^\prime})\\
= & \int_{\mathbf{S}^2_1} Y_{lm}(y_2) Y_{lm}(y_2) \lambda(dy_2) \widetilde{P_{t_2 - t_1}^\Psi}(\mu_{l})\\
= & \widetilde{P_{t_2 - t_1}^\Psi}(\mu_{l}).
\end{align*}
Thus, formula \eqref{cov-fun-proof} takes the form
\begin{align*}
\mathbb{E}[\mathfrak{T}^{\Psi}_{t_1} (x)\, \mathfrak{T}^{\Psi}_{t_2}(x) ] = & \sum_{l} \frac{2l+1}{4\pi} C_l \, \widetilde{P_{t_2 - t_1}^\Psi}(\mu_{l})
\end{align*}
and this concludes the proof.
\end{proof}

We now consider the  random field on the mean subordinate stochastic sphere
\begin{equation}
\eta^\Psi_t(x) = \mathbb{E}[\mathfrak{T}^\Psi_t(x) | \mathfrak{F}_T] =  \mathbb{E}[T(B^\Psi_t - x) | \mathfrak{F}_T], \quad x \in \mathbf{S}^2_1,\; t\geq 0 \label{eta-proof}
\end{equation}
where $T$ is the Gaussian random field \eqref{Trep} and $\mathfrak{F}_T$  is the $\sigma$-field generated by $T$. From \eqref{Trep} and \eqref{angular-power-C} we can write the covariance function of \eqref{eta-proof} as
\begin{align}
\mathbb{E}[\eta^\Psi_{t_1}(x)\eta^\Psi_{t_2}(y)] =  & \mathbb{E} \left[\sum_{lm} \sum_{l^\prime m^\prime} a_{lm} a^*_{l^\prime m^\prime} e^{-t_1 \Psi(\mu_l) - t_2 \Psi(\mu_{l^\prime})} Y_{lm}(x)Y^*_{l^\prime m^\prime}(y) \right]\notag \\
= & \sum_{lm} C_l e^{-(t_1+t_2) \Psi(\mu_l)} Y_{lm}(x)Y^*_{l m}(y)\notag \\
= & \sum_{l} C_l e^{-(t_1+t_2) \Psi(\mu_l)} \sum_{|m| \leq l} Y_{lm}(x)Y^*_{lm}(y)\notag \\
= & \sum_l \frac{2l+1}{4\pi} C_l e^{-(t_1+t_2) \Psi(\mu_l)}Q_l(\langle x, y \rangle) \label{cov-eta-calculation}
\end{align}
for all $x,y \in \mathbf{S}^2_1$ and $t_1,t_2\geq 0$.

\begin{prop}
For $n \in \mathbf{N}$, the higher-order moments of \eqref{eta-proof} are given by
\begin{equation*}
\mathbb{E}[\eta_t^\Psi(gx)]^n = \sum_{l_1 \cdots l_n} e^{-t \Psi(\mu_{l_j})} \sqrt{\frac{\prod_{j=1}^n(2l_j+1)}{(4\pi)^n}} \mathbb{E}[a_{l_1 0} \cdots a_{l_n 0}], \quad \forall\, g \in SO(3).
\end{equation*} 
\end{prop}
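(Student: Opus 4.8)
The plan is to mirror the derivation of the $n$th moment of $T$ recalled in \eqref{moment-T-sub}, exploiting the fact that $\eta^\Psi_t$ has exactly the same spectral structure as $T$ but with the Fourier coefficients $a_{lm}$ replaced by the time-dependent coefficients $a_{lm}(t) = a_{lm}\, e^{-t \Psi(\mu_l)}$. First I would record that $\eta^\Psi_t$ is itself a zero-mean isotropic Gaussian field: isotropy of $T$ is preserved under the spectral multiplier $e^{-t\Psi(\mu_l)}$ because the multiplier depends only on the frequency $l$ and is constant across the index $m$ within each space $\mathcal{H}_l$, so the rotation law of the coefficients on each $\mathcal{H}_l$ is unaffected. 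Consequently $\mathbb{E}[\eta^\Psi_t(gx)]^n$ is independent of both $g$ and $x$, which is precisely what the left-hand side of the statement asserts.

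Second, by this invariance I would evaluate the moment at the pole $\vartheta = 0$, where $\cos\vartheta = 1$. The key simplification is that the spherical harmonics collapse there:
\[
Y_{lm}(\vartheta=0) = \sqrt{\frac{2l+1}{4\pi}}\,\delta_m^0,
\]
which follows from $Q_{lm}(1) = \delta_m^0$ for the associated Legendre functions (the factor $(1-z^2)^{m/2}$ vanishes at $z=1$ unless $m=0$, and $Q_{l0}(1) = Q_l(1) = 1$). Substituting this into the representation $\eta^\Psi_t(x) = \sum_{lm} a_{lm}(t) Y_{lm}(x)$ kills every term with $m \neq 0$ and leaves the single series
\[
\eta^\Psi_t(\vartheta=0) = \sum_{l} a_{l0}\, e^{-t\Psi(\mu_l)} \sqrt{\frac{2l+1}{4\pi}}.
\]

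Third, I would raise this series to the $n$th power, expand the product over the $n$ factors indexed by $l_1,\dots,l_n$, and take expectation. Since the exponentials $e^{-t\Psi(\mu_{l_j})}$ are deterministic they factor out of $\mathbb{E}$, and collecting the normalising constants $\sqrt{(2l_j+1)/4\pi}$ yields
\[
\mathbb{E}[\eta^\Psi_t(\vartheta=0)]^n = \sum_{l_1\cdots l_n} e^{-t\sum_{j}\Psi(\mu_{l_j})}\sqrt{\frac{\prod_{j=1}^n (2l_j+1)}{(4\pi)^n}}\,\mathbb{E}[a_{l_1 0}\cdots a_{l_n 0}],
\]
which is the claimed identity, the symbol $e^{-t\Psi(\mu_{l_j})}$ in the statement being understood as the product $\prod_{j=1}^n e^{-t\Psi(\mu_{l_j})}$.

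The main obstacle is the analytic justification of interchanging the $n$-fold infinite summation with the expectation, that is, the absolute convergence of the multiple series of moments. This is handled exactly as in the $t=0$ case treated in \cite{DomPec-book}: one controls $\mathbb{E}|a_{l_1 0}\cdots a_{l_n 0}|$ through the angular polyspectrum together with the summability of $C_l$, and here the extra bounded factors $e^{-t\Psi(\mu_{l_j})} \in (0,1]$ (since $\Psi \geq 0$ is a Bernstein function) only improve convergence. Hence no estimate beyond the one already guaranteeing $\mathbb{E}[T(gx)]^n < \infty$ is required.
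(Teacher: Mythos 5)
Your proposal is correct and follows essentially the same route as the paper: both arguments reduce the computation by isotropy to the North Pole, where $Y_{lm}(x_N)=\sqrt{(2l+1)/4\pi}\,\delta_m^0$ collapses each frequency component $T_l$ to its $m=0$ coefficient, and then expand the $n$-fold product. The only cosmetic difference is that you evaluate the field at the pole before raising to the $n$th power while the paper expands $\mathbb{E}\bigl[\prod_j T_{l_j}(x)\bigr]$ first and then invokes $T_{l}(x)\stackrel{law}{=}T_{l}(x_N)$; your added remark on justifying the interchange of expectation and the multiple sum is a welcome point the paper leaves implicit.
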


\begin{proof}
The higher-order moments of \eqref{eta-proof} can be obtained as follows
\begin{align*}
\mathbb{E}[\eta_t^\Psi(x)]^n = & \mathbb{E} \left[ \sum_{l_1 \cdots l_n} \prod_{j=1}^n T_{l_j}(x) e^{-t \Psi(\mu_{l_j})} \right]\\
= & \sum_{l_1 \cdots l_n} \mathbb{E}\left[ \prod_{j=1}^n T_{l_j}(x) \right] e^{-t \Psi(\mu_{l_j})}
\end{align*}
where
\begin{align*}
\mathbb{E}\left[ \prod_{j=1}^n T_{l_j}(x) \right] = & \sum_{m_1 \cdots m_n} \mathbb{E}[a_{l_1m_1} \cdots a_{l_n m_n}] \prod_{j=1}^n Y_{l_j m_j}(x).
\end{align*}
The random field $T$ is isotropic. Due to the fact that $T_l(x) \stackrel{law}{=}T(x_N)$ where $x_N=(0,0)$ is the North Pole and that $Y_{lm}(x_N) =0$ for $m \neq 0$ and $Y_{l0}(x_N) = \sqrt{(2l+1)/4\pi}$ (\cite{Quantum}), we can write
\begin{align*}
\mathbb{E}\left[ \prod_{j=1}^n T_{l_j}(x) \right] = & \sqrt{\frac{\prod_{j=1}^n(2l_j+1)}{(4\pi)^n}} \mathbb{E}[a_{l_1 0} \cdots a_{l_n 0}] .
\end{align*}
By collecting all pieces together we get
\begin{equation}
\mathbb{E}[\eta_t^\Psi(x)]^n = \sum_{l_1 \cdots l_n} e^{-t \Psi(\mu_{l_j})} \sqrt{\frac{\prod_{j=1}^n(2l_j+1)}{(4\pi)^n}} \mathbb{E}[a_{l_1 0} \cdots a_{l_n 0}].
\end{equation} 
By observing that 
\begin{equation*}
\mathbb{E}[\eta_t^\Psi(gx)]^n = \mathbb{E}[\eta_t^\Psi(x)]^n, \quad \forall\, g \in SO(3)
\end{equation*}
we complete the proof.
\end{proof}

The  fractional operator \eqref{frac-oper-sphere} can be rewritten as
\begin{align*}
\mathbb{D}^\alpha_M\, f(x) = & \int_0^\infty \left( P_sf(x) - f(x) \right) M(ds)\\
= & \int_0^\infty \left( \mathbb{E}f(x+B_s) - f(x) \right) M(ds)\\
= & \int_0^\infty \mathbb{E}\left[\left( f(x+B_s) - f(x) \right) \right] M(ds)\\
= & \int_0^\infty \int_{\mathbf{S}^2_1}\left( f(y) - f(x) \right) \lambda(y) Pr\{ x+ B_s \in dy \}  M(ds)\\
= &  \int_{\mathbf{S}^2_1}\left( f(y) - f(x) \right) J(x,y)\lambda(dy) 
\end{align*}
where
\begin{align*}
J(x,y) = & \int_0^\infty Pr\{ x+B_s \in dy \}/dy  M(ds)\\
= & \sum_{l} \frac{2l+1}{4\pi} Q_l(\langle y, x \rangle) \int_0^\infty e^{-s \mu_l} M(ds)\\
= & \sum_{l} \frac{2l+1}{4\pi} Q_l(\langle y, x \rangle) \Psi^\prime(\mu_l)
\end{align*}
and $\Psi^\prime(\mu) = d/d\mu \Psi(\mu)$ as we can see from \eqref{lap-exp-sub} with $b=0$. We notice that
\begin{equation*}
0 \leq J(x,y) < \infty\quad \textrm{ and } \quad J(x,y)=J(y,x).
\end{equation*}
We also recall that $\Psi$ is a Bernstein function and therefore $\Psi \geq 0$ (in particular, $\Psi(0)=0$) and $(-1)^k d^k/d \mu^k\Psi(\mu) \leq 0$ for all $\mu \geq 0$ and $k=1,2, \ldots$. 

\begin{proof}[Proof of Theorem \ref{thm-seq-Psi}]
First we show that
\begin{equation}
\mathbb{P}_s Y_{lm}(x) = Y_{lm}(x) e^{-s \Psi(\mu_l)}. \label{semigroup-B-proof-psi}
\end{equation}
Indeed, we have that
\begin{align*}
 \mathbb{P}_t Y_{lm}(x) = &  \mathbb{E} Y_{lm}(x+B^\Psi_t)\\
= &  \sum_{l^\prime m^\prime} \widetilde{P_t}(\mu_{l^\prime}) Y_{l^\prime m^\prime}^*(x) \int_{\mathbf{S}^2_1} Y_{lm}(y) Y_{l^\prime m^\prime }(y)\lambda(dy)\\
= & \widetilde{P_t^\Psi}(\mu_{l}) (-1)^m Y_{l -m}^*(x)\\
= &\widetilde{P_t^\Psi}(\mu_{l})  Y_{l m}(x)
\end{align*}
were we have used the orthogonality property \eqref{orthoY} and the fact that $Y_{lm} = (-1)^m Y^*_{l-m}$. We recall that
\begin{equation*}
\widetilde{P_t^\Psi}(\mu) = \mathbb{E}\exp\left( -\mu B_{D_t} \right) = \exp\left( -t \Psi(\mu) \right)
\end{equation*}
where $D_t$ is a subordinator with Laplace exponent $\Psi$. For $\Psi(\mu)=\mu$, we get that
\begin{equation}
P_s Y_{lm}(x) = Y_{lm}(x) e^{-s \mu_l} \label{semigroup-B-proof}
\end{equation}
where $P_t = e^{t \triangle_{\mathbf{S}^2_1}}$ is the transition semigroup of the rotational Brownian motion $B_t$, $t>0$, with transition law \eqref{law-standard-rot-B}. We assume that \eqref{u-field-rep-Psi} holds true. We immediately see that $\eta^\Psi_0(x) = T(x)$ which is in accord with the initial condition. With \eqref{semigroup-B-proof} in mind, we obtain 
\begin{align*}
\mathbb{D}_{M}^\alpha \eta^\Psi_t(x) = & \int_0^\infty \left( P_s\, \eta^\Psi_t(x) - \eta^\Psi_t(x) \right) M(ds)\\
= & \sum_{lm} e^{-t \Psi(\mu_l)} a_{lm} \int_0^\infty \left( P_s\, Y_{lm}(x) - Y_{lm}(x) \right) M(ds)\\
= & \sum_{lm} e^{-t \Psi(\mu_l)} a_{lm} \int_0^\infty \left( e^{-s \mu_l} Y_{lm}(x) - Y_{lm}(x) \right) M(ds)\\
= & \sum_{lm} e^{-t \Psi(\mu_l)} a_{lm} Y_{lm}(x) \int_0^\infty \left( e^{-s \mu_l} - 1 \right) M(dy)\\
= & \sum_l e^{-t \Psi(\mu_l)} \left( - \int_0^\infty \left( 1 - e^{-s \mu_l} \right) M(ds) \right) T_l(x)\\
= & \sum_l e^{-t \Psi(\mu_l)} \left( -\Psi(\mu_l) \right) T_l(x)
\end{align*}
where in the last step we have considered \eqref{lap-exp-sub} with drift term $b=0$. Indeed, we are considering subordinators with no drift. Also, we have that
\begin{equation}
\sum_l \partial_t e^{-t \Psi(\mu_l)} T_l(x) = \sum_l e^{-t \Psi(\mu_l)} \left( -\Psi(\mu_l) \right) T_l(x) < \infty.
\end{equation}
From this, we arrive at
\begin{equation}
(\partial_t - \mathbb{D}_{M}^\alpha )  \eta_t^\Psi(x) = \sum_{lm}a_{lm} (\partial_t - \mathbb{D}_{M}^\alpha ) e^{-t\Psi(\mu_l)} Y_{lm}(x) = 0
\end{equation}
end equation \eqref{eq-eta-psi-field} is satisfied.

For the completeness of the set $\{Y_{lm}\,:\, l\geq 0,\; m=-l, \ldots , +l\}$, the series \eqref{u-field-rep-Psi} converges in the sense that, $\forall\, t$,
\begin{align*}
\lim_{L \to \infty}\mathbb{E}\left[ \int_{\mathbf{S}^2_1} \left( \eta^\Psi_t(x) - \sum_{l=0}^L e^{-t \Psi(\mu_l)} T_l(x)  \right)^2 \lambda(dx) \right] = &  0.
\end{align*}
Indeed, if we write 
\begin{equation*}
S_L(x) = \sum_{l=0}^N e^{-t \Psi(\mu_l)} T_l(x) = \sum_{l=0}^N \sum_{|m|\leq l}e^{-t \Psi(\mu_l)} Y_{lm}(x)
\end{equation*}
for a fixed $t>0$, then we obtain that
\begin{align*}
\mathbb{E}\| \eta^\Psi_t - S_L\|^2 = & \mathbb{E}\| \eta_t^\Psi \|^2 - \sum_{l=0}^{L} \sum_{|m| \leq l} \mathbb{E}|a_{lm}|^2 e^{-2t\Psi(\mu_l)}\\
 = & \mathbb{E}\| \eta_t^\Psi \|^2 - \sum_{l=0}^{L} \sum_{|m| \leq l} (2l+1)\,C_l e^{-2t\Psi(\mu_l)}
\end{align*} 
where
\begin{equation*}
\| \eta_t^\Psi \|^2 = \int_{\mathbf{S}^2_1} | \eta_t^\Psi(x) |^2 \lambda(dx) \quad \textrm{and} \quad \mathbb{E}\| \eta_t^\Psi \|^2 = \lambda(\mathbf{S}^2_1) \mathbb{E}[\eta^\Psi_t(x)]^2.
\end{equation*}
From \eqref{var-eta-psi}, by recalling that $\lambda(\mathbf{S}^2_1)=4\pi$ and taking the limit for $L \to \infty$, we get that
\begin{align*}
\lim_{L \to \infty} \mathbb{E}\| \eta^\Psi_t - S_L\|^2 = 0.
\end{align*}
Furthermore, from \eqref{semigroup-B-proof-psi}, 
\begin{align*}
\mathbb{P}_t T(x-x_0) = & \sum_l \mathbb{P}^\Psi_t T_l(x-x_0) = \sum_{lm}a_{lm} \mathbb{P}_t Y_{lm}(x-x_0)\\
= &\sum_{lm} a_{lm}\widetilde{P_t^\Psi}(\mu_{l})  Y_{l m}(x-x_0)\\
= & \sum_l \widetilde{P_t^\Psi}(\mu_{l}) T_l(x-x_0) = \sum_{l} e^{-t\Psi(\mu_l)}T_l(x-x_0).
\end{align*}
The proof is completed.
\end{proof}

\begin{proof}[Proof of Corollary \ref{thm-seq}]
We use formula \eqref{beautiful}.  We assume that \eqref{u-field-rep} holds true and, from \eqref{semigroup-B-proof}, we get that 
\begin{align*}
-(-\triangle_{\mathbf{S}^2_1})^\alpha  \eta_t(x) = & \frac{\alpha}{\Gamma(1-\alpha)} \int_0^\infty \Big( P_s \, \eta_t(x) - \eta_t(x) \Big) \frac{ds}{s^{\alpha +1}}\\
= & \frac{\alpha}{\Gamma(1-\alpha)} \sum_{lm} \widetilde{P_t^\Psi}(\mu_l) a_{lm} \int_0^\infty \Big( P_s \, Y_{lm} (x) - Y_{lm}(x) \Big) \frac{ds}{s^{\alpha +1}}\\
= &  \sum_{l} \widetilde{P_t^\Psi}(\mu_l) \left( \frac{\alpha}{\Gamma(1-\alpha)} \int_0^\infty \Big( e^{-s\mu_l} - 1 \Big) \frac{ds}{s^{\alpha +1}}\right) T_l(x)\\
= & \sum_{l} \widetilde{P_t^\Psi}(\mu_l) (- \mu_l^\alpha ) T_l(x)\\
= &  \sum_{l} \partial_t\, \widetilde{P_t^\Psi}(\mu_l) T_l(x).
\end{align*}
We also observe that $\widetilde{P_0^\Psi}(\mu)=1$ and therefore we get $\eta_0(x)$ as in the claim. 
\end{proof}

Now we prove our last result.

\begin{proof}{Proof of Corollary \ref{coro-ultimo}}
It follows from the proof of Corollary \ref{thm-seq}. From \eqref{u-field-rep} we can write

\begin{equation}
\mathbb{E}[\eta_{t_1}(x)\eta_{t_2}(y)] = \sum_{l=0}^\infty \sum_{l^\prime=0}^\infty  e^{-t_1 \mu^\alpha_l } e^{-t_2 \mu^\alpha_{l^\prime} }  \, \mathbb{E}[ T_l(x) T_{l^\prime}(y)]
\end{equation}
From \eqref{cov-T}, by setting $t_1=t_2=t/2$, we get that
\begin{equation}
\gamma_t(x,y) = \sum_{l=0}^\infty  e^{-t \mu^\alpha_l } \, \mathbb{E}[ T_l(x) T_{l}(y)]
\end{equation}
and therefore, 
\begin{align*}
0 = & (\partial_t + (-\triangle_{\mathbf{S}^2_1})^\alpha)\gamma_t(x, \cdot)\\
= & \mathbb{E}\left[ \sum_{l=0}^\infty  (\partial_t + (-\triangle_{\mathbf{S}^2_1})^\alpha) e^{-t \mu^\alpha_l } T_l(x) T_{l}(\cdot) \right]\\
= & \mathbb{E}\left[ \sum_{l=0}^\infty \sum_{m, m^\prime =-l}^{+l} a_{lm}a_{lm^\prime}  (\partial_t + (-\triangle_{\mathbf{S}^2_1})^\alpha) e^{-t \mu^\alpha_l } Y_{lm}(x) Y_{lm^\prime}(\cdot) \right].
\end{align*}
The same reasoning applies for
\begin{align*}
(\partial_t + (-\triangle_{\mathbf{S}^2_1})^\alpha)\gamma_t(\cdot, y) =& 0.
\end{align*}
\end{proof}

\end{document}